
\documentclass[letterpaper, 10 pt, conference]{ieeeconf}  

\IEEEoverridecommandlockouts                              
\overrideIEEEmargins

\input{Notation/packages}

\newtheorem{thm}{Theorem}
\newtheorem{prop}{Proposition}
\newtheorem{lem}{Lemma}
\newtheorem{cor}{Corollary}

\newtheorem{assumption}{Assumption}

\theoremstyle{definition}
\newtheorem{dfn}{Definition}

\theoremstyle{remark} 
\newtheorem{rem}{\textit{Remark}}

\allowdisplaybreaks 
\global\long\def\11{\mathds{1}}
\allowdisplaybreaks 

\newcommand\numberthis{\addtocounter{equation}{1}\tag{\theequation}}
\newcommand{\ra}{\rightarrow}
\newcommand{\la}{\leftarrow}

\newcommand{\mcl}{\mathcal}
\newcommand{\mbb}{\mathbb}
\newcommand{\mbf}{\mathbf}
\newcommand{\mrm}{\mathrm}
\newcommand{\eps}{\epsilon}
\newcommand{\ov}{\overline}
\newcommand{\udl}{\underline}

\def \defeq{:=}
\def \defeqr{=:}

\newcommand{\wh}{\widehat}
\newcommand{\wtd}{\widetilde}
\def \l{\left}
\def \r{\right}
\def \pr{\mbb{P}}

\newcommand{\dotp}[2]{\l\langle #1 \; , \; #2 \r\rangle}

\newcounter{relctr} 
\everydisplay\expandafter{\the\everydisplay\setcounter{relctr}{0}} 

\newcommand\labelrel[2]{%
	\begingroup
	\refstepcounter{relctr}%
	\stackrel{\textnormal{(\alph{relctr})}}{\mathstrut{#1}}%
	\originallabel{#2}%
	\endgroup
}
\AtBeginDocument{\let\originallabel\label} 

\usepackage[linesnumbered,ruled,vlined,algo2e]{algorithm2e}

\SetCommentSty{mycommfont}
\let\oldnl\nl
\newcommand{\nonl}{\renewcommand{\nl}{\let\nl\oldnl}}
\SetKwInput{KwInput}{Input}                
\SetKwInput{KwOutput}{Output}              
\SetKwInOut{Parameter}{Parameter}
\SetKwRepeat{Do}{do}{while}%


\newcommand{\macpomdp}{\texttt{MA-C-POMDP}}

\newcommand{\sacmdp}{\texttt{SA-C-MDP}}

\newcommand{\mapomdp}{\texttt{MA-POMDP}}






\newcommand{\qt}[2]{{#1}_{#2}}
\newcommand{\qn}[2]{{#1}^{\l(#2\r)}}
\newcommand{\qtn}[3]{{#1}_{#2}^{\l(#3\r)}}
\newcommand{\qseq}[2]{ \hspace{2pt}^{#2}\hspace{-1pt}{#1}}
\newcommand{\indevent}[1]{ \11\l[ #1 \r] }
\newcommand{\onevector}{ \mbf{1} }

\newcommand{\tuplemacpomdp}{\mcl{J}}
\newcommand{\coptn}[2]{ \textit{\texttt{COP}}_{#1}^{(#2)}\! }


\newcommand{\Stt}[1]{ \qt{S}{#1} }
\newcommand{\stt}[1]{ \qt{s}{#1} }
\newcommand{\sspace}{\mcl{S}}

\newcommand{\Ot}[1]{ \qt{O}{#1} }

\newcommand{\Otn}[2]{ \qtn{O}{#1}{#2} }
\newcommand{\ot}[1]{ \qt{o}{#1} }

\newcommand{\otn}[2]{ \qtn{o}{#1}{#2} }
\newcommand{\ospace}{\mcl{O}}

\newcommand{\onspace}[1]{  \qn{\ospace}{#1} }
\newcommand{\otnspace}[2]{  \qtn{\ospace}{#1}{#2} }

\newcommand{\At}[1]{ \qt{A}{#1} }

\newcommand{\Atn}[2]{ \qtn{A}{#1}{#2} }
\newcommand{\at}[1]{ \qt{a}{#1} }
\newcommand{\an}[1]{ \qn{a}{#1} }
\newcommand{\atn}[2]{ \qtn{a}{#1}{#2} }
\newcommand{\aspace}{\mcl{A}}
\newcommand{\anspace}[1]{ \qn{\aspace}{#1} }
\newcommand{\atnspace}[2]{ \qtn{\aspace}{#1}{#2} }

\newcommand{\borel}[1]{\mcl{B}\l(#1\r)}

\newcommand{\Mta}[2]{ \qt{M}{#1}\l(#2\r) }

\newcommand{\Hst}[1]{ \qt{H}{#1} }

\newcommand{\Hstn}[2]{ \qtn{H}{#1}{#2} }
\newcommand{\hst}[2]{ \qt{#1}{#2} }
\newcommand{\hsn}[2]{ \qn{#1}{#2}  }
\newcommand{\hstn}[3]{ \qtn{#1}{#2}{#3} }
\newcommand{\hsspace}{ \mcl{H} }
\newcommand{\hstspace}[1]{ \qt{\hsspace}{#1} }
\newcommand{\hsnspace}[1]{ \qn{\hsspace}{#1} }
\newcommand{\hstnspace}[2]{ \qtn{\hsspace}{#1}{#2} }

\newcommand{\ut}[1]{ \qt{u}{#1} }
\newcommand{\un}[1]{ \qn{u}{#1}  }
\newcommand{\utn}[2]{ \qtn{u}{#1}{#2} }
\newcommand{\uspace}{\mcl{U}}
\newcommand{\utspace}[1]{\qt{\uspace}{#1}}
\newcommand{\unspace}[1]{ \qn{\uspace}{#1} }
\newcommand{\utnspace}[2]{ \qtn{\uspace}{#1}{#2}  }
\newcommand{\uspacepure}{ \qt{\uspace}{\mrm{pure}}}
\newcommand{\utspacepure}[1]{ \qt{\uspace}{\mrm{pure}, #1}}
\newcommand{\unspacepure}[1]{ \qtn{\uspace}{\mrm{pure}}{#1}}
\newcommand{\utnspacepure}[2]{ \qtn{\uspace}{\mrm{pure}, #1}{#2}}

\newcommand{\prup}[2]{ \qtn{\pr}{#2}{#1} }
\newcommand{\E}[2]{ \qtn{\mbb{E}}{#2}{#1} }
\newcommand{\tspace}{\mcl{T}}  

\newcommand{\constraintv}{ \breve{D} }	

\newcommand{\infsup}[2]{\inf\limits_{#1}\sup\limits_{#2}}
\newcommand{\supinf}[2]{\sup\limits_{#1}\inf\limits_{#2}}

\newcommand{\ocm}{\mbf{p}}

\newcommand{\ocmtn}[2]{ \qtn{\ocm}{#1}{#2} }

\newcommand{\cb}{\wh{c}}

\newcommand{\boldcbt}[1]{ \qt{ \wh{\mbf{c}}}{#1} }

\newcommand{\db}{\wh{d}}
\newcommand{\dbt}[1]{\qt{\db}{#1}}

\newcommand{\bolddbt}[1]{ \qt{\wh{\mbf{d}}}{#1} }

\newcommand{\cCost}{c \l( \Stt{t}, \At{t} \r)}

\newcommand{\dCost}{ d \l( \Stt{t}, \At{t} \r) }

\newcommand{\dtCost}[1]{ \qt{d}{#1} \l( \Stt{t}, \At{t} \r) }

\newcommand{\Ctn}[2]{\qtn{C}{#1}{#2}}
\newcommand{\ftn}[2]{\qtn{f}{#1}{#2}}
\newcommand{\Dtn}[2]{\qtn{D}{#1}{#2}}
\newcommand{\Ctavg}[1]{\qtn{C}{#1}{\mrm{avg}}}
\newcommand{\Dtavg}[1]{\qtn{D}{#1}{\mrm{avg}}}
\newcommand{\optcosttn}[2]{ \qtn{\udl{C}}{#1}{#2} }

\newcommand{\Ltn}[2]{ \qtn{L}{#1}{#2} }
\newcommand{\lagspace}{\mbb{R}^K_{\ge 0}}

\newcommand{\xtspace}[1]{ \mcl{X}_{#1} }

\newcommand{\equivalent}{\equiv}
\newcommand{\equivt}[1]{\qt{\equivalent}{#1}}

\newcommand{\dom}{\succeq}
\newcommand{\domt}[1]{\qt{\succeq}{#1}}

\newcommand{\wtf}{\wtd{f}}

\newcommand{\wtftn}[2]{\qtn{\wtf}{#1}{#2}}







\newcommand{\Gammat}[1]{ \qt{\Gamma}{#1} }

\newcommand{\Gammatn}[2]{ \qtn{\Gamma}{#1}{#2} }
\newcommand{\gammat}[1]{ \qt{\gamma}{#1} }

\newcommand{\gammatn}[2]{ \qtn{\gamma}{#1}{#2} }
\newcommand{\gammaspace}{\mcl{G}}
\newcommand{\gammatspace}[1]{  \qt{\gammaspace}{#1} }

\newcommand{\gammatnspace}[2]{ \qtn{\gammaspace}{#1}{#2} }


\newcommand{\wtHstn}[2]{ \qtn{\wtd{H}}{#1}{#2} }
\newcommand{\wthsn}[1]{ \qn{\wtd{h}}{#1} }
\newcommand{\wthstn}[2]{ \qtn{\wtd{h}}{#1}{#2} }
\newcommand{\wthstnspace}[2]{ \qtn{\wtd{\mcl{H}}}{#1}{#2} }

\newcommand{\vt}[1]{ \qt{v}{#1} }

\newcommand{\vvspace}{\mcl{V}}
\newcommand{\vtspace}[1]{\qt{\vvspace}{#1}  }
\newcommand{\vvspacepure}{ \qt{\vvspace}{\mrm{pure}} }
\newcommand{\vtspacepure}[1]{ \qt{\vvspace}{\mrm{pure}, #1} }

\newcommand{\ocmb}{\wtd{\mbf{p}}}
\newcommand{\ocmbtn}[2]{ \qtn{\ocmb}{#1}{#2} }

\newcommand{\cc}{\wtd{c}}

\newcommand{\boldcct}[1]{ \qt{ \wtd{\mbf{c}}}{#1} }

\newcommand{\dct}[1]{\qt{\wtd{d}}{#1}}

\newcommand{\bolddct}[1]{ \qt{ \wtd{\mbf{d}}}{#1} }



\allowdisplaybreaks

\pdfminorversion 4

\title{\LARGE \bf
On Equivalence Between Decentralized Policy-Profile Mixtures and Behavioral Coordination Policies in Multi-Agent Systems
}


\author{Nouman Khan and Vijay G. Subramanian
\thanks{N. Khan and V. G. Subramanian are with the EECS Department, University of Michigan, Ann Arbor, MI, 48109-2122. This work was partially supported by NSF Awards CNS 1955777, CCF 2008130, CMMI 2240981, and ECCS 2038416, and ONR Award No. N000142412742. Emails: {\tt\small \{knouman,vgsubram\}@umich.edu}.
        }%
}

\begin{document}

\maketitle
\thispagestyle{empty}
\pagestyle{empty}

\begin{abstract}
Constrained decentralized team problem formulations are good models for many cooperative multi-agent systems. Constraints necessitate randomization when solving for optimal solutions---past results show that joint randomization in the team is in general necessary for (strong) Lagrangian duality to hold---, but a better understanding of randomization still remains. 
For a partially observed multi-agent system with a Borel hidden state, countable observations, and finite actions, we prove the following: \textit{i}) independently randomized decentralized policy-profiles---whether behavioral or pure---induce the same occupation measures (on joint-history and joint-action pairs) as decentralized behavioral policy-profiles; and \textit{ii}) jointly randomized behavioral and pure decentralized policy-profiles induce the same occupation measures. Restricting to finite observations, we also prove that joint mixtures of decentralized policy-profiles (both pure and behavioral) and common information based behavioral coordination policies (also mixtures of them) induce the same occupation measures. This generalizes past work that shows equivalence between pure decentralized policy-profiles and pure coordination policies. These results can be used to develop further results on Lagrangian duality, the minimum number of randomizations needed in an optimal behavioral coordination policy, and learning based schemes that can find approximately optimal solutions.

\end{abstract}

\section{Introduction}\label{sec:intro}
Sequential decision making for cooperative multi-agent systems has been studied extensively in \emph{stochastic control}~\cite{witsenhausen1979structure,WalrandV83,witsenhausen1973standard,nayyarMT2011,nayyar13}, with the focus on \mapomdp s (Multi-Agent POMDPs) where all agents in the team optimize a single (long-term) cost. In this context, using the common information approach, \cite{nayyarMT2011,nayyar13} solve for the optimal control via a coordinator who knows only the common information of all the agents. There, a core structural result shows equivalence between pure (deterministic) decentralized policy-profiles and pure coordination profiles for systems with a finite number of states. 
This paper investigates the more general situation where one long-term (objective) cost must be minimized, while maintaining multiple other long-term (constraint) costs within prescribed limits. Such considerations arise in several real-world applications---such as communication networks, traffic management, energy-grid optimization, e-commerce pricing, environmental monitoring, etc.---where efficient operation must be balanced with maintaining safe operating margins. For this, we study a \textit{cooperative \textbf{M}ulti-\textbf{A}gent \textbf{C}onstrained \textbf{P}artially \textbf{O}bservable \textbf{M}arkov \textbf{D}ecision \textbf{P}rocess}, or simply (a cooperative) \macpomdp. 

In contrast to single-agent unconstrained decision problems~\cite{kumar2015stochastic}, in single-agent constrained decision problems~\cite{altman-constrainedMDP,borkar1988} randomized policies can outperform pure policies. This comparison between randomized and pure policies is well understood for decentralized (dynamic) teams, and also in (dynamic) games~\cite{anantharam2007common,dengwang2023dgaa} (where randomization is necessary for an equilibrium to exist, in general). However, detailed comparison of the different possibilities for randomization in multi-agent systems---jointly across all agents on the policy-profile space, independently by each agent on her own policy space, and using behavioral strategies that involve joint or independent randomizations over the joint-action space---is lacking. 
In this work, we contribute to filling this gap in the team setting as follows:
\begin{itemize}
	\item[\textit{i})] For the general model in Section~\ref{sec:macpomdp}, Section~\ref{sec:dec_pps} shows that independently randomized decentralized policy-profiles---whether behavioral or pure---induce the same set of occupation measures (defined in Section~\ref{sec:macpomdp}) as decentralized behavioral policy-profiles (Theorem~\ref{thm:dec_pps:ocm}). In the same setting, Section~\ref{sec:dec_pps} further shows that jointly randomized behavioral and pure decentralized policy-profiles induce the same set of occupation measures (Theorem~\ref{thm:dec_pps:ocm_3}). 
	
	\item[\textit{ii})] Under the additional assumption of finite number of observations, Section~\ref{sec:ciapproach} establishes that the occupation measures induced by joint mixtures of decentralized policy-profiles coincide with those induced by (common information based) behavioral coordination policies (Theorem~\ref{thm:ciapproach:equivalence}).
\end{itemize}
\section{System Model and Preliminaries}\label{sec:macpomdp}

\subsection{System Model}
The sequential decision making model that we consider can be specified by a tuple $\tuplemacpomdp = \l( N, \sspace, \ospace, \aspace, \mcl{P}_{tr}, c, d, P_1 \r)$. Here, $N \in \mbb{N}$ denotes the number of agents (which is assumed fixed), $\sspace$ represents the state space, $\ospace$ is the joint-observation space, and $\aspace$ is the joint-action space. The transition-law is given by $\mcl{P}_{tr}$ while $P_1$ represents probability law for the initial state and joint-observation. The immediate costs are given by $c$ and $d$. These attributes and the decision process are described below.
\begin{itemize}[
	leftmargin=0pt, 
	itemindent=10pt,
	labelwidth=0pt, 
	labelsep=5pt, 
	]
	\item \textbf{State Process}: 
	The state space $\sspace$ is a topological space with the Borel $\sigma$-algebra $\borel{\sspace}$. The state process is denoted by $\l\{\Stt{t} \r\}_{t=1}^{\infty} \subseteq \sspace$, where $\Stt{t}$ denotes the state of the system at time-step $t \in \mbb{N}$.

	\item \textbf{Joint-Observation Process}: The joint-observation space $\ospace$ is a countable discrete space $ \ospace = \onspace{0:n}$, where $\onspace{0}$ denotes the common observation space of the team, and for each $n \in [N]$, $\onspace{n}$ denotes the private observation space of agent $n$. The joint-observation process is denoted by $\l\{ \Ot{t} \r\}_{t=1}^{\infty} \subseteq \ospace$, where $\Ot{t} = \Otn{t}{0:N}$ and is such that at time-step $t$, agent $n$ only observes $\Otn{t}{0}$ and $\Otn{t}{n}$.
	
	\item \textbf{Joint-Action Process}: The joint-action space $\aspace $ is a finite discrete space of the form $ \aspace  = \anspace{1:N} $, where $\anspace{n}$ denotes the action space of agent $n$. The joint-action process is denoted by $\l\{ \At{t} \r\}_{t=1}^{\infty} \subseteq \aspace$, where $\At{t} = \Atn{t}{1:N}$ and $\Atn{t}{n}$ denotes the action of agent $n$ at time-step $t$. Since all $\anspace{n}$'s (hence, $\aspace$ as well) are finite, they are all compact metric spaces (so, also complete and separable).
	
	\item \textbf{Transition-law}: 
	At time-step $t$, given the current state $\Stt{t}$ and the joint action $\At{t}$, the next state $\Stt{t+1}$ and joint observation $\Ot{t+1}$ evolve in a time-homogeneous manner independent of all previous states, all previous and current joint observations, and all previous joint actions. The \emph{transition-law}, $\mcl{P}_{tr}$, governing this evolution is given by a family of transition probabilities as follows:
	\begin{align*}
		\hspace{-1.1em}\mcl{P}_{tr} \!\defeq\! \l\{ P\l(  s,a,B,o \r) \!:\! s \in \sspace, a\in\aspace, B \in  \borel{\sspace}, o \in \ospace \r\},
	\end{align*}
	where, for all $B\in\borel{\sspace}$, $P\l( \cdot, \cdot, B, \cdot \r) : \sspace \times \aspace \times \ospace \ra [0,1]$ is Borel measurable, and for any $t \in \mbb{N}$, 
	\begin{align*}
		\begin{split}
			&\pr\big( \Stt{t+1} \in B , \Ot{t+1} = o | \Stt{1:t-1}  = \stt{1:t-1}, \Ot{1:t} = \ot{1:t}, \\
            &\hspace{10em} \At{1:t-1} = \at{1:t-1}, \Stt{t} = s, \At{t} = a \big) \\
			&\hspace{0pt}
			= \pr\l( \Stt{t+1} \! \in B, \Ot{t+1} \! = o | \Stt{t} = s, \At{t} = a \r)	
            \hspace{0pt} \defeqr \! P\l(  s,a,B,o \r).
		\end{split}
	\end{align*}

	\item \textbf{Initial Distribution}: $P_1$ is a (fixed) probability measure for the initial state and joint-observation pair, i.e., $P_1 \in \Mta{1}{\sspace\times \ospace}$, and for all $B \in \borel{S}$, 
	\begin{align*}
			\pr \l( \Stt{1} \in B, \Ot{1} = o \r) &\defeqr P_1\l( B, o \r).
	\end{align*}    
	
	\item \textbf{Immediate-costs}: 
	Let $c: \sspace \times \aspace \mapsto \mbb{R} $ be a Borel measurable scalar function, and $d: \sspace \times \aspace \mapsto \mbb{R}^K$ a Borel measurable vector function. Agents would like to minimize the expected discounted aggregate (defined later) of $c$ while keeping the expected discounted aggregate of $d$ below a specified threshold. We refer to $c$ as the \textit{immediate objective cost} and to $d$ as the \textit{immediate constraint cost}. Henceforth, both immediate costs are assumed to be bounded from below, i.e., Assumption~\ref{assmp:macpomdp:boundedcosts}(\textit{a}) always holds.
	\begin{assumption}[Boundedness of immediate costs] \label{assmp:macpomdp:boundedcosts}
        \leavevmode
		\begin{enumerate}[
			leftmargin=*,           
			labelsep=*,          
			labelwidth=!,           
			itemindent=0pt,         
			align=left              
			]		
			\item[(a)] Immediate costs are bounded from below: there exist $\udl{c} \in \mbb{R}$ and $\udl{d} \in \mbb{R}^{K} $ s.t. 
			$\udl{c} \le c(\cdot, \cdot)$ and $\udl{d} \le d( \cdot, \cdot)$.


			
			\item[(b)] Immediate costs are bounded from above: there exist $\ov{c} \in \mbb{R}$ and $\ov{d} \in \mbb{R}^K$ s.t. $c(\cdot, \cdot)  \le \ov{c}$ and $d(\cdot, \cdot) \le \ov{d}$.
		\end{enumerate}
	\end{assumption}
	
%
%
	
	\item \textbf{Decision Process}: 
	The \emph{decision process} proceeds as follows: 
	\textit{i}) At time-step $t$, the current state $\Stt{t}$ and joint-observation $\Ot{t}$ are generated (according to $\mcl{P}_{tr}$, or $P_1$ if $t=1$); \textit{ii}) each agent $n$ chooses her action $\Atn{t}{n}$; \textit{iii}) the immediate costs $c\l( \Stt{t}, \At{t} \r), d\l(\Stt{t}, \At{t}\r)$ are incurred; and \textit{iv}) the system moves to the next time-step, i.e., $t \la t+1$. 
	It is assumed that the agents take actions according to a (valid) causal \emph{interaction-mechanism}. 

	\begin{dfn}[Causal Interaction Mechanism]\label{dfn:macpomdp:interaction_mechanism}
		
		We say that agents follow a (valid) causal {interaction-mechanism} $x$ if for each time-step $t$, the joint-action $\At{t}$ is taken using knowledge of observations and actions up to that time, namely $\Ot{1:t-1} \At{1:t-1} $, and if it induces a unique probability measure, say $\prup{x}{P_1} = \prup{x}{P_1, \mcl{P}_{tr}}$, on the set of all trajectories of the process $\l\{ \l(\Stt{t}, \Ot{t}, \At{t}\r) \r\}_{t=1}^{\infty}$, namely $\l(\sspace \times \ospace \times \aspace\r)^{\infty} $ endowed with the Borel $\sigma$-algebra.

		We refer to the tuple $\Ot{1:t}\At{1:t-1}$ as the \emph{joint-history} of the team at time-step $t$ and denote it by $\Hst{t}$. The set of all realizations of $\Hst{t}$ is denoted by $\hstspace{t}$.
	\end{dfn}
	
\end{itemize}

\subsection{Constrained Optimization Problem}\label{sec:macpomdp:optimization_problem}
\subsubsection{Long-term Costs} 
Let $x$ denote a (valid) causal interaction-mechanism with a unique probability measure $ \prup{x}{P_1} = \prup{x}{P_1, \mcl{P}_{tr}}$ as mentioned above, and let $\E{x}{P_1}=\E{x}{P_1, \mcl{P}_{tr}}$ be the corresponding expectation operator. Using the \emph{discounted cost criterion} which de-emphasizes the far future, the long-term objective and constraint costs for time-horizon $T$ and discount-factor $\alpha$, such that $(T, \alpha) \in \l(\mbb{N} \cup \{\infty\}\r) \times (0,1] \setminus \{(\infty, 1)\}$, are defined as follows:
\begin{align*}
	\begin{split}
		\Ctn{T}{\alpha} \l( x \r) &\defeq  \E{x}{P_1} \l[ \sum_{t=1}^{T} \alpha^{t-1} \cCost \r] \text{, and } \\
		\Dtn{T}{\alpha}\l(x\r) & \defeq  \E{x}{P_1}\l[ \sum_{t=1}^{T} \alpha^{t-1} \dCost \r].
	\end{split}
\end{align*}

\subsubsection{Optimization Problem}
Let $X$ denote a set of (valid) causal interaction-mechanisms. Given discount-factor $\alpha$ and time-horizon $T$, the agents' collective goal as a team is to solve the following constrained optimization problem:
\begin{align}
	\begin{split}
		&\text{minimize } \Ctn{T}{\alpha}(x) \notag\\
		&\text{subject to } x \in X \text{ and } \Dtn{T}{\alpha}(x) \le \constraintv.
	\end{split}
	\Bigg\}
	\tag{$\coptn{T}{\alpha}\l( X \r)$-1}
	\label{eq:macpomdp:cop_1}
\end{align}
Here, $\constraintv$ is a fixed $K$-dimensional real-valued vector where $K$ is the number of constraints. 
The study of \eqref{eq:macpomdp:cop_1}---e.g., strong duality and existence of saddle-point---is \udl{not the focus} of this paper, and the reader is referred to \cite{khan2023,khan2023cooperative, nkhanthesis2025} for such results. 

\subsection{Preliminaries}\label{sec:macpomdp:preliminaries}
In this paper, we consider different choices of $X$ in \eqref{eq:macpomdp:cop_1}, and compare the sets of occupation measures that result. 

\begin{assumption}[Structural Assumptions]
\label{assmp:macpomdp:structured_X}
	\leavevmode
	\begin{enumerate}
		\item[(a)] 
		$X$ is endowed with a topology under which it forms a compact metric space.
		\item[(b)] 
		Under the topology mentioned in (a), for all $t \in \mbb{N}$, $\hst{h}{t} \in \hstspace{t}$, and $\at{t} \in \aspace$, the mapping $\prup{\cdot}{P_1}: x \in X \mapsto \prup{x}{P_1}\l(\hst{h}{t}, \at{t} \r) \in \mbb{R}$ is continuous. 
	\end{enumerate}
\end{assumption}
\begin{rem}
Assumptions~\ref{assmp:macpomdp:structured_X}(\textit{a}, \textit{b}) together imply uniform continuity of each mapping in the set,
\begin{align*}
&\big\{ \prup{\cdot}{P_1}: x \in X \mapsto \prup{x}{P_1}\l(\hst{h}{t}, \at{t} \r) \in \mbb{R} :  t \in \mbb{N}, \hst{h}{t} \in \hstspace{t}, \at{t} \in \aspace \big\}.
\end{align*}
\end{rem}
\noindent\textit{Roadmap of technical results:} We will show that the interaction mechanisms to be studied satisfy Assumptions~\ref{assmp:macpomdp:structured_X}(\textit{a,b}). Thereafter, we can compare them via intermediate results: Lemma~\ref{lem:macpomdp:continuityofmeasures_1} is used in Section~\ref{sec:dec_pps} to show the equivalence of joint mixtures of decentralized behavioral policy-profiles and joint mixtures of decentralized pure policy-profiles; and Lemma~\ref{lem:macpomdp:continuityofmeasures_2} is used in Section~\ref{sec:ciapproach} to prove the equivalence of behavioral coordination policies and joint mixtures of decentralized policy-profiles. The (omitted) proofs of the two lemmas follow easily using Assumptions~\ref{assmp:macpomdp:structured_X}(\textit{a,b}).
\begin{lem}\label{lem:macpomdp:continuityofmeasures_1}
	Let $X$ be a set of (valid) causal interaction-mechanisms which satisfies Assumptions~\ref{assmp:macpomdp:structured_X}(a,b). Let $z$ be a (valid) causal interaction-mechanism and suppose for every $\eps>0$, there exists $x \in X$ such that
	\begin{align*}
    \begin{split}
		&\l| \prup{x}{P_1} \l( \hst{h}{t}, \at{t} \r) - \prup{z}{P_1} \l( \hst{h}{t}, \at{t} \r) \r| \le \eps, \forall t \in \mbb{N}, \hst{h}{t} \in \hstspace{t}, \at{t} \in \aspace.
    \end{split}
	\end{align*}
	Then, there exists $x^\star = x^\star(z) \in X$ such that
	\begin{align*}
    \begin{split}
		& \prup{x^\star}{P_1} \l( \hst{h}{t}, \at{t} \r) 
		= \prup{z}{P_1} \l( \hst{h}{t}, \at{t} \r), 
       \forall t \in \mbb{N}, \hst{h}{t} \in \hstspace{t}, \at{t} \in \aspace.
    \end{split}
	\end{align*} 
\end{lem}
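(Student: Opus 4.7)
The plan is to combine the compactness of $X$ with the pointwise continuity of the evaluation maps $x \mapsto \prup{x}{P_1}\l( \hst{h}{t}, \at{t}\r)$ through a standard sequential-compactness argument. First, I would instantiate the hypothesis at the tolerances $\eps_n = 1/n$ for $n \in \mbb{N}$ to extract a sequence $\{x_n\}_{n \in \mbb{N}} \subseteq X$ such that
\[
\l| \prup{x_n}{P_1}\l(\hst{h}{t}, \at{t}\r) - \prup{z}{P_1}\l(\hst{h}{t}, \at{t}\r) \r| \le \tfrac{1}{n}
\]
simultaneously for every $t \in \mbb{N}$, $\hst{h}{t} \in \hstspace{t}$, and $\at{t} \in \aspace$.

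Next, by Assumption~\ref{assmp:macpomdp:structured_X}(\textit{a}), $X$ is a compact metric space, so $\{x_n\}$ admits a convergent subsequence $\{x_{n_k}\}_{k \in \mbb{N}}$ with some limit $x^\star = x^\star(z) \in X$. Fixing this $x^\star$ once and for all, I would then pick an arbitrary triple $(t, \hst{h}{t}, \at{t})$ and invoke Assumption~\ref{assmp:macpomdp:structured_X}(\textit{b}) to conclude $\prup{x_{n_k}}{P_1}\l(\hst{h}{t}, \at{t}\r) \to \prup{x^\star}{P_1}\l(\hst{h}{t}, \at{t}\r)$ as $k \to \infty$. The pointwise specialization of the uniform bound above also yields $\prup{x_{n_k}}{P_1}\l(\hst{h}{t}, \at{t}\r) \to \prup{z}{P_1}\l(\hst{h}{t}, \at{t}\r)$, so uniqueness of limits in $\mbb{R}$ delivers the desired equality at $(t, \hst{h}{t}, \at{t})$. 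Since the triple was arbitrary and $x^\star$ was chosen \emph{before} any triple was fixed, the equality holds for all $(t, \hst{h}{t}, \at{t})$ simultaneously.

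There is no serious obstacle here: the uniform-continuity remark following Assumption~\ref{assmp:macpomdp:structured_X} is not strictly needed, because only pointwise (in $(t, \hst{h}{t}, \at{t})$) convergence along the extracted subsequence is used. The one apparent concern is that $\hstspace{t}$ is typically countably infinite (since $\ospace$ is), so one might worry about diagonalizing across the countably many triples $(t, \hst{h}{t}, \at{t})$; this is circumvented by the ordering of quantifiers, namely that compactness first produces a single limit $x^\star \in X$ and only afterwards is pointwise continuity invoked. The main delicacy in the write-up is therefore to keep this order of quantifiers explicit so that the same $x^\star$ serves all triples.
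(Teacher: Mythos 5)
Your argument is correct and is precisely the standard compactness-plus-continuity argument that the paper has in mind (the paper omits the proof, noting only that it ``follows easily using Assumptions~\ref{assmp:macpomdp:structured_X}(\textit{a},\textit{b})''). Your care with the order of quantifiers --- extracting a single limit $x^\star$ via sequential compactness before fixing any triple $(t, \hst{h}{t}, \at{t})$, so that no diagonalization is needed --- is exactly the right way to make the omitted argument rigorous.
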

\begin{proof}
	See \cite[Lemma 2.2]{nkhanthesis2025}.\qedhere
\end{proof}

\begin{lem}\label{lem:macpomdp:continuityofmeasures_2}
	Let $X$ be a set of (valid) causal interaction-mechanisms which satisfies Assumptions~\ref{assmp:macpomdp:structured_X}(a,b). Suppose for some (valid) causal interaction-mechanism $z$, there exists a sequence $\{x_i\}_{i=1}^{\infty} \subseteq X$ such that each $x_i$ satisfies:
	\begin{align*}
    \begin{split}
		& \prup{x_i}{P_1} \l( \hst{h}{t}, \at{t} \r) = \prup{z}{P_1} \l( \hst{h}{t}, \at{t} \r), 
        \forall t \in [1, i]_{\mbb{Z}}, 
        \hst{h}{t} \in \hstspace{t}, \at{t} \in \aspace.
    \end{split}
	\end{align*}
	Then, there exists $x^\star = x^\star(z) \in X$ such that 
	\begin{align*}
    \begin{split}
		& \prup{x^\star}{P_1} \l( \hst{h}{t}, \at{t} \r) = \prup{z}{P_1} \l( \hst{h}{t}, \at{t} \r), 
		\forall t \in \mbb{N}, \hst{h}{t} \in \hstspace{t}, \at{t} \in \aspace.
    \end{split}
	\end{align*}
\end{lem}
\begin{proof}
	See \cite[Lemma 2.3]{nkhanthesis2025}.\qedhere
\end{proof}

\subsection{Occupation Measures}\label{sec:macpomdp:ocm}
For a (valid) causal interaction-mechanism $x$, causality leads to the following \emph{strategic independence} property: for all $t \in \mbb{N}$ and $B \in \borel{S}$,
\begin{align*}
	\prup{x}{P_1} \l( \Stt{t} \in B \big| \hst{h}{t}, \at{t} \r) = \pr_{P_1} \l( \Stt{t} \in B \big| \hst{h}{t}, \at{t}   \r). 
	\numberthis\label{eq:macpomdp:conditional_independence}
\end{align*}
This ensures that the corresponding conditional expectations of the immediate costs are also independent of $x$, i.e., for all $t\in \mbb{N}$,
\begin{align}
	\begin{split}
		\E{x}{P_1} \l[ \cCost \big| \hst{h}{t}, \at{t} \r] 
		& = 
		\cb \l(t, \hst{h}{t}, \at{t} \r), \text{ and } \\
		\E{x}{P_1} \l[ \dtCost{k} \big| \hst{h}{t}, \at{t} \r] 
		& =  
		\dbt{k} \l(t, \hst{h}{t}, \at{t} \r), k \in [K].
	\end{split}
	\label{eq:macpomdp:cthtat_and_dthtat}
\end{align}
Using \eqref{eq:macpomdp:cthtat_and_dthtat}, we obtain an inner-product representation for the long-term costs.
\begin{lem}
\label{lem:macpomdp:innerproduct_rep}\leavevmode
	Let $x$ be a (valid) causal interaction-mechanism. 
    Under Assumption~\ref{assmp:macpomdp:boundedcosts}(a), for every $(T, \alpha)$, 
    the following hold:
	\begin{align}
		\begin{split}
		\Ctn{T}{\alpha}(x) =  \ftn{T}{c}\l( \ocmtn{T}{\alpha}(x) \r) \defeq \dotp{\ocmtn{T}{\alpha}(x) }{ \boldcbt{T} }, \text{ and } \\
		\Dtn{k,T}{\alpha}(x) \!=\! \ftn{T}{d_k}\l( \ocmtn{T}{\alpha}(x) \r) \! \defeq \! \dotp{\ocmtn{T}{\alpha}(x) }{\bolddbt{k, T}}, k \in \! [K],
		\end{split}
		\label{eq:macpomdp:innerproduct_rep}
	\end{align}
	where,
	\begin{align*}
		\begin{split}
			& \ocmtn{T}{\alpha} (x) \! = \! \l\{ \alpha^{t-1} \prup{x}{P_1} \l( \hst{h}{t}, \at{t} \r) \!: t \in \! [1,T]_{\mbb{Z}}, \hst{h}{t} \in \hstspace{t}, \at{t} \in \aspace \r\} \!, \\
			& \boldcbt{T} \! = \! \l\{ \cb \l( t, \hst{h}{t}, \at{t} \r) \! : t \in \! [1,T]_{\mbb{Z}}, \hst{h}{t} \in \hstspace{t}, \at{t} \in \aspace \r\},\\
			& \bolddbt{k, T} \! = \! \l\{ \dbt{k} \l(t, \hst{h}{t}, \at{t} \r) \! : \! t \in \! [1,T]_{\mbb{Z}}, \hst{h}{t} \in \hstspace{t}, \at{t} \in \aspace \r\} \!, k \in \! [K].
		\end{split}
	\end{align*}
\end{lem}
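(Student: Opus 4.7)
The plan is to start from the definition in equation~\eqref{eq:macpomdp:Ctn_and_Dtn} and rewrite the expectation using the strategic-independence property already established in \eqref{eq:macpomdp:conditional_independence}--\eqref{eq:macpomdp:cthtat_and_dthtat}. First I would apply the tower rule to each summand by conditioning on $(\Hst{t}, \At{t})$, i.e., $\E{x}{P_1}[c(\Stt{t},\At{t})] = \E{x}{P_1}\bigl[\E{x}{P_1}[c(\Stt{t},\At{t}) \mid \Hst{t}, \At{t}]\bigr]$. Next, \eqref{eq:macpomdp:cthtat_and_dthtat} replaces the inner conditional expectation by the $x$-free quantity $\cb(t, \Hst{t}, \At{t})$. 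Since $\Hst{t}$ takes values in the countable set $\hstspace{t}$ and $\At{t}$ in the finite set $\aspace$, the outer expectation is just the countable sum $\sum_{\hst{h}{t}, \at{t}} \prup{x}{P_1}(\hst{h}{t}, \at{t})\, \cb(t, \hst{h}{t}, \at{t})$; pairing $\alpha^{t-1}$ with the probability yields precisely the $(t, \hst{h}{t}, \at{t})$-coordinate of $\ocmtn{T}{\alpha}(x)$. Summing over $t \in [1,T]_{\mbb{Z}}$ produces the inner product $\dotp{\ocmtn{T}{\alpha}(x)}{\boldcbt{T}}$, and the identical computation componentwise for each $d_k$ gives the second identity.

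The one step that needs care is the interchange of summation over $t$ with the expectation, particularly when $T = \infty$ (in which case by hypothesis $\alpha \in (0,1)$). Because Assumption~\ref{assmp:macpomdp:boundedcosts}(a) only bounds the immediate costs from below, I would split $c = (c - \udl{c}) + \udl{c}$, and likewise each $d_k$. For the nonnegative shifted part Tonelli's theorem legitimizes the sum--expectation exchange (the conditional expectations $\cb - \udl{c}$ inherit nonnegativity), while the constant tail $\sum_{t=1}^{T} \alpha^{t-1}\udl{c}$ is a finite sum when $T<\infty$ and a convergent geometric series when $T = \infty,\ \alpha < 1$; recombining produces the claimed finite value in inner-product form. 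The excluded pair $(T,\alpha) = (\infty,1)$ is precisely what ensures this tail is well-defined.

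I expect no conceptual obstacle: once the strategic-independence identities of \eqref{eq:macpomdp:conditional_independence}--\eqref{eq:macpomdp:cthtat_and_dthtat} are in hand, the lemma is essentially a rebracketing of a telescoping conditional expectation into a countable-index inner product. The only technical subtlety is the Tonelli/absolute-convergence argument sketched above, which is why both the boundedness-from-below assumption and the exclusion of $(\infty,1)$ appear in the hypotheses.
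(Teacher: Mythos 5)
Your proposal is correct and follows essentially the same route as the paper: the paper's (omitted) proof is the elementary tower-property/strategic-independence computation for finite $T$, invoking the Monotone Convergence Theorem for $T=\infty$, which is interchangeable with your Tonelli argument after the shift by the lower bound $\udl{c}$.
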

\begin{proof} The proof is elementary except when $T=\infty$, which uses the Monotone Convergence Theorem. See \cite[Lemma 2.4]{nkhanthesis2025}.\qedhere
\end{proof}
In line with the prevailing usage in the \sacmdp \space literature, 
we refer to $\ocmtn{T}{\alpha}(x)$ as the \emph{(discounted) occupation measure}\footnote{Note that unlike the state, a joint-history $\hst{h}{t}$ can be visited only once (at its corresponding time $t$).} generated by $x$ since
\begin{align*}
\begin{split}
	& \sum_{t=1}^{T} \sum_{\hst{h}{t} \in \hstspace{t}} \sum_{\at{t} \in \aspace} \alpha^{t-1} \prup{x}{P_1} \l( \hst{h}{t}, \at{t} \r) = \sum_{t=1}^{T} \alpha^{t-1} \defeqr m(T, \alpha).
\end{split}
\end{align*}
Additionally, due to \eqref{eq:macpomdp:innerproduct_rep}, \eqref{eq:macpomdp:cop_1} has the following equivalent constrained optimization problem:
\begin{align}
	\begin{split}
		&\min \ftn{T}{c}(q) \notag\\
		&\text{s.t. } q \in \ocmtn{T}{\alpha} \l( X \r) \text{ and } \ftn{T}{d_k}(q) \le \constraintv_k \ \forall k \in [K],
	\end{split}
	\Bigg\}
	\tag{$ \coptn{T}{\alpha}\l(X\r)$-2}
	\label{eq:macpomdp:cop_2}
\end{align}
where $\ocmtn{T}{\alpha}(X) \subseteq \Mta{m(T, \alpha)}{\bigcup_{t=1}^{T} \l( \hstspace{t} \times \aspace \r) } $ is the set of all occupation measures that are generated through some interaction-mechanism in $X$. 

\begin{rem}\label{rem:macpomdp:ocm}
	A few remarks are in order:
	\begin{enumerate}
		\item \label{rem:macpomdp:ocm:pinf1} Assumption~\ref{assmp:macpomdp:structured_X}(\textit{b}) can be replaced by point-wise continuity of each component of the mapping,
		\begin{align*}
			\ocmtn{\infty}{1} \l( \cdot \r): x \in X \mapsto \ocmtn{\infty}{1} \l( x \r) \in \Mta{\infty}{\bigcup_{t=1}^{\infty} \l( \hstspace{t} \times \aspace \r) }.
		\end{align*}
		When comparing two interaction-mechanisms or sets thereof, the mapping $\ocmtn{\infty}{1}(\cdot)$ plays an important role---by comparing undiscounted probabilities over joint-history and joint-action pairs, one can present results for all permissible values of $(T, \alpha)$.
		
		\item If $\ocmtn{\infty}{1}\l( X \r)$ is convex, so is $\ocmtn{T}{\alpha} \l( X \r)$ for any $(T, \alpha)$.

		\item If $\ocmtn{\infty}{1}\l( X\r) \supseteq \ocmtn{\infty}{1}\l( X' \r)$, then $X$ \emph{dominates} $X'$---that is, any tuple of long-term objective and constraint costs 
        attained by a mechanism in $X'$ can also be attained by a mechanism in $X$. A classical example is in the single-agent setting, where behavioral policies are known to dominate pure policies.
	\end{enumerate} 
\end{rem}

We now present an important result for a generic set of interaction-mechanisms that satisfies Assumptions~\ref{assmp:macpomdp:structured_X}(\textit{a,b}).
\begin{thm}\label{thm:macpomdp:duality_W}
	Let $X$ denote a set of (valid) causal interaction-mechanisms which satisfies Assumptions~\ref{assmp:macpomdp:structured_X}(a,b). Then, under Assumption~\ref{assmp:macpomdp:boundedcosts}(a), the following statements hold for all $(T, \alpha)$:
	\begin{enumerate}
		\item[(A)]\label{thm:macpomdp:duality_W:continuity} The long-term costs, $\Ctn{T}{\alpha}$ and $\Dtn{k,T}{\alpha}$, $k \in [K]$, are lower semi-continuous. 
        If  Assumption~\ref{assmp:macpomdp:boundedcosts}(b) also holds, then they are uniformly continuous, 
		
		\item[(B)]\label{thm:macpomdp:duality_W:compact_ocm} The set $\ocmtn{T}{\alpha}(X)$, as a subspace of $\Mta{m(T, \alpha)}{\bigcup_{t=1}^{T} \l( \hstspace{t} \times \aspace \r)} $, is compact.
		
		\item[(C)]\label{thm:macpomdp:duality_W:continuity_dotp} The inner-product forms, 
        $\ftn{T}{c}$ and $\ftn{T}{d_k}$, $k \in [K]$, are lower semi-continuous. 
        If Assumption~\ref{assmp:macpomdp:boundedcosts}(b) also holds, then they are uniformly continuous. 

	\end{enumerate}
\end{thm}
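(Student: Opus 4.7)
My plan is to establish (B) first, then derive the lower-semi-continuity portion of (C) by a supremum-of-continuous-functions argument, obtain the uniform-continuity portion via a Dini-type tightness lemma, and recover (A) in both regimes by composing with the continuous coordinate map from (B) using the representation in Lemma~\ref{lem:macpomdp:innerproduct_rep}.

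For (B), I endow $\Mta{m(T,\alpha)}{\cup_{t=1}^{T} (\hstspace{t}\times\aspace)}$ with the topology of coordinatewise convergence in the $(t,\hst{h}{t},\at{t})$-entries; Assumption~\ref{assmp:macpomdp:structured_X}(b) is precisely the statement that $\ocmtn{T}{\alpha}(\cdot): X \to \ocmtn{T}{\alpha}(X)$ is continuous under this topology, so by Assumption~\ref{assmp:macpomdp:structured_X}(a) the image is the continuous image of a compact metric space, hence compact.

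For the lower-semi-continuity part of (C), I shift $c$ by $\udl{c}$ so that WLOG $c\ge 0$ (this changes $\ftn{T}{c}$ only by a constant), and similarly for each $d_k$. Then $\ftn{T}{c}(q) = \sum_{(t,\hst{h}{t},\at{t})} q(t,\hst{h}{t},\at{t})\, \cb(t,\hst{h}{t},\at{t})$ is a sum of non-negative terms, and for each finite index set $\mcl{F}$ the partial sum depends only on finitely many coordinates of $q$ and is therefore continuous; $\ftn{T}{c}$ is the pointwise supremum of these continuous partial sums, hence lower semi-continuous. The argument for $\ftn{T}{d_k}$ is identical. The lower-semi-continuity portion of (A) then follows by composing with the continuous map $\ocmtn{T}{\alpha}$ from (B).

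The chief obstacle is the uniform-continuity claim under Assumption~\ref{assmp:macpomdp:boundedcosts}(b), since $\hstspace{t}$ is countably infinite and the range of $t$ is unbounded when $T=\infty$. I would resolve this through a tightness lemma: enumerate $\hstspace{t}\times\aspace = \{y_i\}_{i\ge 1}$ and set $g_N(x) = \sum_{i=1}^N \prup{x}{P_1}(y_i)$; each $g_N$ is continuous on the compact metric $X$, and $g_N \uparrow 1$ pointwise, so by Dini's theorem $\sup_{x\in X} \l[1 - g_N(x)\r]\to 0$. Hence for any $\epsilon>0$ there is a finite $F_t \subseteq \hstspace{t}\times\aspace$ with $\sup_{x\in X} \prup{x}{P_1}\l((\hstspace{t}\times\aspace)\setminus F_t\r) < \epsilon$. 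Combined with summability of $\alpha^{t-1}$ when $T=\infty, \alpha<1$ (or finiteness of $T$ otherwise), this lets me truncate the defining sum to a finite index set on which only finitely many jointly uniformly continuous coordinate maps $x\mapsto \prup{x}{P_1}(\hst{h}{t},\at{t})$ appear; their common modulus of continuity together with the uniform bound $|\cb|,\|\bolddbt{k}\|_\infty \le \ulbar{c}\vee\ulbar{d}$ yields uniform continuity of $\Ctn{T}{\alpha}, \Dtn{k,T}{\alpha}$ on $X$ and of $\ftn{T}{c}, \ftn{T}{d_k}$ on $\ocmtn{T}{\alpha}(X)$. The hardest piece is arranging the truncation budget to be uniform in both $t$ and $x$ in the infinite-horizon discounted case, which forces the tail bound in $(\hst{h}{t},\at{t})$ to be balanced against the $\alpha^{t-1}$-weighted tail in $t$.
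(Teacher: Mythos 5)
Your proposal is correct, and its core matches the (omitted) proof's stated ingredients exactly: compactness of $X$ plus coordinatewise continuity gives (B) as a continuous image of a compact metric space; discreteness of $\cup_{t=1}^{T}\l(\hstspace{t}\times\aspace\r)$ lets you treat $\ftn{T}{c}$ as a series; and your supremum-of-continuous-partial-sums argument after shifting by $\udl{c}$ is precisely the Fatou's-Lemma step (the shift is legitimate because every $q\in\ocmtn{T}{\alpha}(X)$ has the same total mass $m(T,\alpha)$, so the shift changes $\ftn{T}{c}$ by the constant $\udl{c}\,m(T,\alpha)$). The one place you take a noticeably heavier route is the uniform-continuity claim: the Dini/tightness/truncation machinery works, but it is not needed. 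Under Assumption~\ref{assmp:macpomdp:boundedcosts}(b) you can simply run the same Fatou argument on $\ov{c}-c$ to get \emph{upper} semi-continuity as well, hence continuity of $\ftn{T}{c}$ and $\Ctn{T}{\alpha}$; since $\ocmtn{T}{\alpha}(X)$ and $X$ are compact metric spaces, Heine--Cantor then upgrades continuity to uniform continuity for free, which is presumably what the authors mean by ``properties of compact metric spaces.'' Your Dini argument buys nothing extra here (it is itself only valid because $X$ is compact), so the two-sided Fatou route is both shorter and closer to the paper's sketch; otherwise the logic, including the reduction of (A) to (C) by composing with the continuous map $\ocmtn{T}{\alpha}$, is sound.
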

\begin{proof}
The proof uses properties of compact metric spaces, Fatou's Lemma, and countability of the discrete space $\bigcup_{t=1}^{T} \l( \hstspace{t} \times \aspace \r)$. See \cite[Theorem 2.1]{nkhanthesis2025}. \qedhere
\end{proof}
\renewcommand{\mbb}{\mathbb}
\renewcommand{\mcl}{\mathcal}

\section{Decentralized Policy-Profiles and their Mixtures}\label{sec:dec_pps}
 \subsection{System Model}\label{sec:dec_pps:model}
To formalize decentralized policy-profiles, we first define \emph{common} and \emph{private} histories:  
\textit{i}) The \emph{common history} of the team at time $t$, denoted by $\Hstn{t}{0} \in \hstnspace{t}{0}$, is defined as all the common observations the agents have seen up to time $t$, i.e., 
$\Hstn{t}{0} \defeq \Otn{1:t}{0}$.  
%
\textit{ii}) The \emph{private history} of agent $n$ at time $t$, denoted by $\Hstn{t}{n} \in \hstnspace{t}{n}$, is defined as all her private observations seen thus far and all her previous actions (excluding those that are part of the common observations), i.e., 
 \begin{align*}
 	\begin{split}
 		\Hstn{1}{n} &\defeq \Otn{1}{n}, \text{ and}\\
 		\Hstn{t}{n} &\defeq \l( \Hstn{t-1}{n}, \Atn{t-1}{n}, \Otn{t}{n} \r) \setminus \Hstn{t}{0} \ \forall t \in [2,\infty]_{\mbb{Z}}.
 	\end{split}
 \end{align*}
For simplicity, we assume that $\{\Hstn{t}{0},\Hstn{t}{1},\dotsc,\Hstn{t}{N}\}$ forms a partition of the joint-history $\Hst{t}$ at all times. So, $\Hst{t} = \Hstn{t}{0:N}$ for all $t$. 
\begin{rem}
To simplify notation, from now on, we assume that agents' actions are \udl{forever} private, i.e., $\Atn{1:t}{n}$ has nothing in common with $\Hstn{t}{0}$. So, $\hstnspace{t}{n} = \otnspace{1:t}{n} \times \atnspace{1:t-1}{n}$.
\end{rem}

\begin{dfn}[Decentralized Policy-Profiles]\label{dfn:dec_pps:beh_policy_profile}
	A \emph{decentralized behavioral policy-profile} is defined as a tuple $u=\un{1:N} \in \uspace \defeq \unspace{1:N} $ where $\un{n}$ denotes the behavioral policy used by agent $n$. Specifically, $\un{n}$ is a tuple of the form $ \utn{1:\infty}{n} \in \utnspace{1:\infty}{n}$ where $\utn{t}{n} \in \utnspace{t}{n}$ is the \emph{decision-rule} of agent $n$ for time $t$---a (Borel measurable) mapping from  $\hstnspace{t}{0, n} $ to $\Mta{1}{\anspace{n}}$ so that $\Atn{t}{n} \sim \utn{t}{n} ( \Hstn{t}{0, n} )$. 
	
	Decentralized pure policy-profiles (in which all agents choose their actions deterministically) lie inside $\uspace$. We denote the set of (Borel measurable) pure decision-rules of agent $n$ for time $t$ by $\utnspacepure{t}{n} \subseteq \utnspace{t}{n} $, the set of pure policies of agent $n$ by $\unspacepure{n} = \utnspacepure{1:\infty}{n} \subseteq \unspace{n}$, and the set of \emph{decentralized pure policy-profiles} by $\uspacepure = \unspacepure{1:N} \subseteq \uspace $.
\end{dfn}
We note that, due to decentralized randomizations, at any time $t$, given a joint-history $\hst{h}{t} \in \hstspace{t}$, the probability that the team forms joint-action $\at{t} \in \aspace$ is given by:
\begin{align*}
	\ut{t}\l(\at{t} \big|\hst{h}{t}\r) 
	\! \defeq \! \! \prod_{n=1}^{N} \! \utn{t}{n} \! \l( \! \hstn{h}{t}{0,n} \! \r) \! \l( \! \atn{t}{n}  \! \r) \!= \! \! \prod_{n=1}^{N} \! \utn{t}{n} \! \l( \! \atn{t}{n} \Big| \hstn{h}{t}{0, n} \! \r) \!.\numberthis\label{eq:dec_pps:uah}
\end{align*}
Causality is clear from \eqref{eq:dec_pps:uah}. Then, from the Ionesca-Tulcea theorem, it follows that there exists a unique probability measure $\prup{u}{P_1} = \prup{u}{P_1, \mcl{P}_{tr}}$ on the set of all infinite trajectories, namely $(\sspace \times \ospace \times \aspace)^{\infty} $, endowed with the Borel $\sigma$-algebra.\footnote{$\prup{u}{P_1}$ is referred to as a strategic measure in \cite{feinberg1996,piunovski1998}.} Thus, $\uspace$ is a set of (valid) causal interaction-mechanisms. 

Next, we ensure that $\uspace$ can form a compact metric space (Assumption~\ref{assmp:macpomdp:structured_X}(\textit{a})). Let 
\begin{align*}
	\begin{split}
		\hsnspace{0, n} \defeq \bigcup_{t= 1}^{\infty} \hstnspace{t}{0,n}, \text{ and } \quad 
		\hsspace \defeq \bigcup_{t=1}^{\infty} \hstspace{t},
	\end{split}
\end{align*}
and note that $\uspace$ and each $\unspace{n}$, $n \in [N]$, are, respectively, in one-to-one correspondence with the sets,
\begin{align*}
	\xtspace{\uspace} \! \defeq \! \prod_{n=1}^{N} \! \xtspace{\unspace{n}}, \text{ and } 
	\xtspace{\unspace{n}} \! \defeq \hspace{-20pt} \prod_{\hsn{h}{0,n} \in \hsnspace{0,n}} \Mta{1}{\anspace{n}; \hsn{h}{0,n}}.
\end{align*}
Since each $\Mta{1}{\anspace{n}; \hsn{h}{0,n}}$ is a compact metric space\footnote{This follows from Prokhorov's theorem. Being a finite discrete space, $\anspace{n}$ is a compact metric space, so complete and separable.}, endowing $\xtspace{\uspace}$ and $\xtspace{\unspace{n}}$, $n \in [N]$ with the product topologies, we ensure that $\uspace$ and $\unspace{n}$, $n\in [N]$, all form compact metric spaces.\footnote{This holds via Tychonoff's theorem and the fact that countable product of metric spaces is metrizable under the product topology.} Then, additionally, the sets $\unspacepure{n}$ and $\uspacepure$, being closed in $\unspace{n}$ and $\uspace$ respectively, also form compact metric spaces.
\begin{prop}\label{prop:dec_pps:subspace_topologies}
	The subspace topologies of $\unspacepure{n} \subseteq \unspace{n}$, $n \in [N]$, and $\uspacepure \subseteq \uspace$ are compact and metrizable.
\end{prop}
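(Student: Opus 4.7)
The plan is to derive both claims from the topological fact that any closed subset of a compact metrizable space is itself compact and metrizable in the subspace topology. The ambient spaces $\unspace{n}$ and $\uspace$ are already compact metric spaces (via Prokhorov applied to each $\Mta{1}{\anspace{n}; h}$, Tychonoff on the countable product, and metrizability of countable products of metric spaces, as argued in the paragraph preceding the proposition), so the task reduces to exhibiting $\unspacepure{n}$ and $\uspacepure$ as closed subsets of their respective ambient spaces.

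The key step exploits the finiteness of $\anspace{n}$. Since $\anspace{n}$ is finite and discrete, $\Mta{1}{\anspace{n}}$ is a probability simplex of dimension $|\anspace{n}|-1$ equipped with the weak topology, and the set of Dirac measures on $\anspace{n}$---call it $\mcl{D}_n$---is exactly its finite vertex set; in particular, $\mcl{D}_n$ is a closed (indeed discrete) subset of $\Mta{1}{\anspace{n}}$. A decision-rule $\utn{t}{n}$ is pure precisely when $\utn{t}{n}(h) \in \mcl{D}_n$ for every $h \in \hstnspace{t}{0,n}$. Therefore, under the identification $\unspace{n} \equiv \xtspace{\unspace{n}} = \prod_{h \in \hsnspace{n}} \Mta{1}{\anspace{n}; h}$, the pure-policy set $\unspacepure{n}$ corresponds to the product $\prod_{h \in \hsnspace{n}} \mcl{D}_n$ (one copy of $\mcl{D}_n$ per history $h$), which is closed in the product topology as a product of closed sets. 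Being a closed subset of the compact metric space $\unspace{n}$, $\unspacepure{n}$ is compact; and since every subset of a metrizable space inherits a metric, it is also metrizable.

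For the joint case, under the identification $\uspace \equiv \xtspace{\uspace} = \prod_{n=1}^{N} \xtspace{\unspace{n}}$, the set $\uspacepure = \unspacepure{1:N}$ corresponds to $\prod_{n=1}^{N} \unspacepure{n}$ (each factor viewed as the closed subset of $\xtspace{\unspace{n}}$ identified above), which is closed in $\xtspace{\uspace}$ as a finite product of closed sets; the same closed-subset-of-compact-metric-space argument then finishes the proof. The only mild subtlety---hardly a true obstacle---is confirming that the subspace topology on a product of subsets agrees with the product of the subspace topologies inherited from the factors; this is a standard feature of product topologies and needs no further work.
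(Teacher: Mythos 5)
Your proof is correct and matches the approach the paper intends: the paper omits an explicit proof, but the sentence immediately preceding the proposition ("Therefore, the sets $\unspacepure{n}$ and $\uspacepure$ also form compact metric spaces") is justified precisely by your observation that pure policies correspond to the product of the (finite, hence closed) Dirac-vertex sets inside each simplex factor $\Mta{1}{\anspace{n}; h}$, making $\unspacepure{n}$ and $\uspacepure$ closed subsets of the compact metrizable ambient spaces.
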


In keeping with \sacmdp s,  
the first choice for the team's search space would be $\uspace$. However, because agents randomize independently to generate the joint-action---see \eqref{eq:dec_pps:uah}---, the resulting set of occupation measures, namely $\ocmtn{T}{\alpha}(\uspace)$, will, in general, be non-convex and thus (strong) Lagrangian duality may not hold---see \cite[Example 5.1]{altman-constrainedMDP}, \cite[Example 1]{chen2024hardness}, and \cite[Proposition 3.3]{nkhanthesis2025}. Hence, we will enlarge the search space by introducing an additional layer of randomization. Since $\uspace$ has been given a topology, we can use the set of probability measures 
on its Borel $\sigma$-algebra, namely $\Mta{1}{\uspace}$, which by Prokhorov's theorem, is a compact metric space under the topology of weak convergence. We denote an element of $\Mta{1}{\uspace}$ by $\mu$ and refer to it as a \emph{joint mixture of decentralized behavioral policy-profiles}. Under $\mu \in \Mta{1}{\uspace}$, the behavioral policy-profile of the team becomes a random object $U$, and the system trajectory 
includes an additional time-step 0 where $U$ is realized. Let $\prup{\mu}{P_1} = \prup{\mu}{P_1, \mcl{P}_{tr}}$ be the unique probability measure on the set of all infinite trajectories, namely $\uspace \times (\sspace \times \ospace \times \aspace)^{\infty} $, endowed with the Borel $\sigma$-algebra, and let $\E{\mu}{P_1} = \E{\mu}{P_1, \mcl{P}_{tr}}$ denote the corresponding expectation operator. Then, we have:
\begin{align*}
\begin{split}
	&\Ctn{T}{\alpha} \! \l(\mu\r)
	\!=\! \E{\mu}{P_1} \! \l[ \! \Ctn{T}{\alpha} \! \l(U\r) \! \r]   
	 \text{, and } 
	\Dtn{T}{\alpha} \! \l(\mu\r) \!=\! \E{\mu}{P_1} \! \l[ \! \Dtn{T}{\alpha} \! \l(U\r) \r].
\end{split}
\end{align*}
Sampling a behavioral policy-profile using $\mu \in \Mta{1}{\uspace}$ necessitates joint randomization (hence, the qualifier joint) or some form of communication between agents.\footnote{For example, one of the agents could use the measure $\mu \in \Mta{1}{\uspace} $ to draw $U \in \uspace$, and then communicate it to the rest of the team.} 
There are a few subsets of $\Mta{1}{\uspace}$ that we will discuss:
\begin{enumerate}
\item Joint mixtures of decentralized pure policy-profiles, $\Mta{1}{\uspacepure}$; 
\item Product mixtures of decentralized behavioral policy-profiles, $\prod_{n=1}^{N} \Mta{1}{\unspace{n}} $; and 
\item Product mixtures of decentralized pure policy-profiles, $\prod_{n=1}^{N} \Mta{1}{\unspacepure{n}} $.
\end{enumerate}
Like $\uspace$ and $\Mta{1}{\uspace}$, all these subsets satisfy Assumption~\ref{assmp:macpomdp:structured_X}(\textit{a}). 
\begin{prop}\label{prop:dec_pps:subspace_topologies_2}
The subspace topology of $\Mta{1}{\uspacepure} \subseteq \Mta{1}{\uspace}$ and the product topologies of $\prod_{n=1}^{N} \Mta{1}{\unspace{n}} $, and $\prod_{n=1}^{N} \Mta{1}{\unspacepure{n}} $ are compact and metrizable.
\end{prop}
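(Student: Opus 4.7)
The plan is to verify the three claims separately by combining Prokhorov's theorem (already invoked in the excerpt to show that $\Mta{1}{\uspace}$ is a compact metric space under weak convergence) with Proposition~\ref{prop:dec_pps:subspace_topologies} and a standard finite-product argument. For the product spaces $\prod_{n=1}^{N}\Mta{1}{\unspace{n}}$ and $\prod_{n=1}^{N}\Mta{1}{\unspacepure{n}}$, the argument is immediate: the discussion preceding Proposition~\ref{prop:dec_pps:subspace_topologies} establishes that every $\unspace{n}$ is a compact metric space, and Proposition~\ref{prop:dec_pps:subspace_topologies} itself gives that every $\unspacepure{n}$ is a compact metric subspace. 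Applying Prokhorov's theorem to each factor yields that each $\Mta{1}{\unspace{n}}$ and each $\Mta{1}{\unspacepure{n}}$ is compact metric, and a finite product of compact metric spaces is compact and metrizable (by Tychonoff's theorem together with the standard product metric), covering these two cases.

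The nontrivial claim is for $\Mta{1}{\uspacepure}$ with the subspace topology inherited from $\Mta{1}{\uspace}$. My plan is to identify $\Mta{1}{\uspacepure}$ with its image under the pushforward map $\iota:\nu\mapsto j_\star\nu$ induced by the inclusion $j:\uspacepure \hookrightarrow \uspace$, and to show that this identification is a topological embedding with closed range. First, by Proposition~\ref{prop:dec_pps:subspace_topologies}, $\uspacepure$ is a compact metric space, so a second application of Prokhorov's theorem shows that $\Mta{1}{\uspacepure}$ is compact and metrizable in its \emph{intrinsic} weak topology. Next, $\iota$ is injective, and continuous for the weak topologies, because for any continuous $\phi$ on the compact space $\uspace$ the restriction $\phi|_{\uspacepure}$ is continuous and bounded on $\uspacepure$ with $\int \phi \, d\iota(\nu) = \int \phi|_{\uspacepure} \, d\nu$.

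The remaining step is to characterize $\iota(\Mta{1}{\uspacepure})$ and verify that it is closed. Since $\uspacepure$ is compact and hence closed in the Hausdorff space $\uspace$, the image equals $\{\mu \in \Mta{1}{\uspace} : \mu(\uspacepure) = 1\}$. The Portmanteau theorem then shows this set is closed: if $\mu_k \to \mu$ weakly with $\mu_k(\uspacepure)=1$, the closedness of $\uspacepure$ in $\uspace$ gives $\mu(\uspacepure) \ge \limsup_k \mu_k(\uspacepure) = 1$. Consequently, $\iota(\Mta{1}{\uspacepure})$ is a closed subset of the compact space $\Mta{1}{\uspace}$, hence compact; $\iota$ is then a continuous bijection from a compact space onto a Hausdorff subspace, hence a homeomorphism; and so the subspace topology on $\iota(\Mta{1}{\uspacepure})$ coincides with the intrinsic weak topology on $\Mta{1}{\uspacepure}$, which is compact and metrizable. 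The main conceptual obstacle is precisely this identification of the intrinsic weak topology with the subspace topology inherited from $\Mta{1}{\uspace}$; once that is rigorous, the proposition follows.
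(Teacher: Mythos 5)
Your proposal is correct; the paper states this proposition without proof, and your argument (Prokhorov's theorem on each compact metric factor plus finite products for the two product spaces, and the identification of $\Mta{1}{\uspacepure}$ with the Portmanteau-closed set $\{\mu \in \Mta{1}{\uspace}: \mu(\uspacepure)=1\}$ via the pushforward of the inclusion) is exactly the standard route the authors implicitly rely on, given that Proposition~\ref{prop:dec_pps:subspace_topologies} already supplies closedness of $\uspacepure$ in $\uspace$. Your extra step showing the intrinsic weak topology coincides with the subspace topology is a worthwhile clarification of the identification the paper takes for granted.
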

\begin{proof}
	See \cite[Proposition 3.2]{nkhanthesis2025}.
\end{proof}

\subsection{Analytic and Structural Results}\label{sec:dec_pps:duality}
With $\uspace$, $\Mta{1}{\uspace}$, $\Mta{1}{\uspacepure} $, $\prod_{n=1}^{N} \Mta{1}{\unspace{n}} $, and $\prod_{n=1}^{N} \Mta{1}{\unspacepure{n}}$ all forming compact metric spaces, we now investigate the compactness and convexity properties of their achievable occupation measures and the relationships between them.



\begin{thm}\label{thm:dec_pps:continuity_ocm}
	Let $X$ denote one of the spaces,  $\uspace$, $\Mta{1}{\uspace}$, $\Mta{1}{\uspacepure} $, $\prod_{n=1}^{N} \Mta{1}{\unspace{n}} $, $\prod_{n=1}^{N} \Mta{1}{\unspacepure{n}}$. Then, each component of the mapping, 
	$$\ocmtn{\infty}{1} (\cdot) : x \in X \mapsto \ocmtn{\infty}{1}(x) \in \Mta{\infty}{\bigcup_{t=1}^{\infty} \l( \hstspace{t} \times \aspace \r) },$$
	is pointwise continuous. Thus, by Theorem~\ref{thm:macpomdp:duality_W}(\textit{B}), for any $(T, \alpha)$, $\ocmtn{T}{\alpha} \l( X \r) $ is a compact subspace of $\Mta{m(T,\alpha)}{\bigcup_{t=1}^{T} \l( \hstspace{t}\times\aspace \r) }$.
\end{thm}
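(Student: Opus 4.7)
The plan is to verify Assumption~\ref{assmp:macpomdp:structured_X}(b) for each of the five candidate spaces $X$, since by Remark~\ref{rem:macpomdp:ocm}.\ref{rem:macpomdp:ocm:pinf1} this is equivalent to pointwise continuity of $\ocmtn{\infty}{1}$, and then invoke Theorem~\ref{thm:macpomdp:duality_W}(B) for compactness. Assumption~\ref{assmp:macpomdp:structured_X}(a) is already in hand from Propositions~\ref{prop:dec_pps:subspace_topologies} and~\ref{prop:dec_pps:subspace_topologies_2}, so the remaining task is to show, for every fixed triple $(t,\hst{h}{t},\at{t})$, continuity of the map $x\mapsto\prup{x}{P_1}(\hst{h}{t},\at{t})$ on $X$. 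The strategy is to handle $\uspace$ directly and bootstrap all four mixture cases from it.

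For $X=\uspace$, marginalizing over the hidden state trajectory $\stt{1:t}$ using~\eqref{eq:macpomdp:psabo}--\eqref{eq:macpomdp:initialdistribution} would produce an expression of the form
\begin{equation*}
\prup{u}{P_1}(\hst{h}{t},\at{t}) = A(\hst{h}{t},\at{t})\cdot \prod_{n=1}^{N}\prod_{\tau=1}^{t} \utn{\tau}{n}\l(\atn{\tau}{n}\,\big|\,\hstn{h}{\tau}{0,n}\r),
\end{equation*}
where $A(\hst{h}{t},\at{t})\in[0,1]$ absorbs all the $u$-independent state--observation integrals against $P_1$ and $\mcl{P}_{tr}$. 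Each factor of the product is the composition of a coordinate projection on $\xtspace{\uspace}$ from~\eqref{eq:dec_pps:xuspaceandxunspace} with the evaluation map $\nu\mapsto\nu(\atn{\tau}{n})$ on the finite simplex $\Mta{1}{\anspace{n};\hstn{h}{\tau}{0,n}}$; both maps are continuous, so the finite product is a continuous $[0,1]$-valued function of $u$. For $X\in\{\Mta{1}{\uspace},\Mta{1}{\uspacepure}\}$, Fubini gives
\begin{equation*}
\prup{\mu}{P_1}(\hst{h}{t},\at{t}) = \int_{\uspace} \prup{u}{P_1}(\hst{h}{t},\at{t})\, d\mu(u),
\end{equation*}
and continuity in $\mu$ is the defining property of the weak-convergence topology, since the integrand has just been shown to be bounded continuous. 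For the product mixtures $X\in\{\prod_{n=1}^{N}\Mta{1}{\unspace{n}},\prod_{n=1}^{N}\Mta{1}{\unspacepure{n}}\}$, the multiplicative form above lets the integral separate across agents,
\begin{equation*}
\prup{\mu}{P_1}(\hst{h}{t},\at{t}) = A(\hst{h}{t},\at{t})\cdot \prod_{n=1}^{N} \int_{\unspace{n}} \prod_{\tau=1}^{t} \utn{\tau}{n}\l(\atn{\tau}{n}\,\big|\,\hstn{h}{\tau}{0,n}\r)\, d\mu_n(\un{n}),
\end{equation*}
and each single-agent integral is continuous in $\mu_n$ by the same weak-convergence argument, so the whole expression is jointly continuous in $(\mu_1,\ldots,\mu_N)$ under the product topology.

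With Assumption~\ref{assmp:macpomdp:structured_X}(b) verified in every case, Theorem~\ref{thm:macpomdp:duality_W}(B) immediately yields compactness of $\ocmtn{T}{\alpha}(X)$ for every admissible $(T,\alpha)$. I do not foresee a substantial obstacle: the entire argument is powered by the single observation that $\prup{u}{P_1}(\hst{h}{t},\at{t})$ depends on $u$ through only finitely many coordinate evaluations and factors multiplicatively across agents, after which each of the five spaces is dispatched by the appropriate blend of Fubini and weak-convergence continuity. The one care-point worth flagging is that the claimed continuity of $\ocmtn{\infty}{1}$ is coordinatewise and requires no uniformity in $(t,\hst{h}{t},\at{t})$, which is consistent with the product topology on the codomain induced by the countable discrete index set $\cup_{t=1}^{\infty}(\hstspace{t}\times\aspace)$.
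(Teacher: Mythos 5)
Your proposal is correct and follows essentially the same route as the paper: the factorization you derive is exactly the paper's relation~\eqref{eq:dec_pps:pmuhtat}, from which the paper likewise deduces continuity on $\uspace$ (Lemma~\ref{lem:dec_pps:puth}), extends to $\Mta{1}{\uspace}$ via weak convergence of the bounded continuous integrand (Lemma~\ref{lem:dec_pps:pmuth}), and handles the pure and product-mixture cases as corollaries (Corollary~\ref{cor:dec_pps:pw_continuity}). The only cosmetic difference is that you replace the paper's inductive argument with the explicit closed-form product over finitely many coordinates, which is an equivalent way of organizing the same computation.
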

\begin{proof}
	The proof follows directly from Lemmas~\ref{lem:dec_pps:puth} and \ref{lem:dec_pps:pmuth}, and Corollary~\ref{cor:dec_pps:pw_continuity}. \qedhere
\end{proof}
\begin{lem}\label{lem:dec_pps:puth}
	Let $\l\{ \qseq{u}{i} \r\}_{i=1}^{\infty} \subseteq \uspace$ be a sequence that converges to $u \in \uspace$. Then, for any $t \in \mbb{N}$, $ \hst{h}{t} \in \hstspace{t} $, and $\at{t} \in \mcl{A}$,
	 \begin{align*}
		 	\lim_{i\ra \infty} \prup{\qseq{u}{i}}{P_1} \l(\hst{h}{t}, \at{t} \r) &= \prup{u}{P_1} \l(\hst{h}{t}, \at{t} \r).
		 \end{align*}
\end{lem}
\begin{proof}
Uses induction. See \cite[Lemma 3.1]{nkhanthesis2025}. \qedhere
\end{proof}

\begin{lem}
	\label{lem:dec_pps:pmuth}
	Let $\l\{ \mu_i \r\}_{i=1}^{\infty} \subseteq \Mta{1}{\uspace}$ be a sequence that converges to $\mu \in \Mta{1}{\uspace}$. Then, for any $t \in \mbb{N}$, $ \hst{h}{t} \in \hstspace{t} $, and $\at{t} \in \mcl{A}$,
	 \begin{align*}
	 	\lim_{i\ra \infty} \prup{\mu_i}{P_1} \l( \hst{h}{t}, \at{t} \r) 
	 	&= \prup{\mu}{P_1} \l( \hst{h}{t}, \at{t} \r).
	 \end{align*}
\end{lem}
\begin{proof}
By noting the relation,
\begin{align*}
\begin{split}
& \prup{\mu_i}{P_1} \l( \hst{h}{t}, \at{t} \r) =
P_1\l( \sspace, \hst{h}{1} \r) \prod_{t'=2}^{t} \pr_{P_1} \l( \ot{t'} \big| \hst{h}{t'-1}, \at{t'-1} \r)\\
	& \qquad	\times \int_{\uspace}
		\prod_{n=1}^{N} \prod_{t'=1}^{t} \utn{t'}{n}\l( \atn{t'}{n} \big|  \hstn{h}{t'}{0, n} \r) \mu_i\l( du \r),
\end{split}\numberthis\label{eq:dec_pps:pmuhtat}
\end{align*}
the proof uses Lemma~\ref{lem:dec_pps:puth} in an induction argument. See \cite[Lemma 3.2]{nkhanthesis2025}.\qedhere
\end{proof}

\begin{cor}
	\label{cor:dec_pps:pw_continuity}
	Let $X$ be one of the sets $\Mta{1}{\uspacepure}$, $\prod_{n=1}^{N} \Mta{1}{\unspace{n}} $, and $ \prod_{n=1}^{N} \Mta{1}{\unspacepure{n}} $, and let $\l\{ x_i \r\}_{i=1}^{\infty} \subseteq X $ be a sequence that converges to $x \in X$. Then, for any $t \in \mbb{N}$, $ \hst{h}{t} \in \hstspace{t} $, and $\at{t} \in \mcl{A}$,
	 \begin{align*}
	 	\lim_{i\ra \infty} \prup{x_i}{P_1} \l( \hst{h}{t}, \at{t} \r) 
	 	&= \prup{x}{P_1} \l( \hst{h}{t}, \at{t} \r).
	 \end{align*}
\end{cor}
 \begin{proof} See \cite[Corollary 3.1]{nkhanthesis2025}.\qedhere
 \end{proof}

The next result establishes convexity of the occupation measures realized from $\Mta{1}{\uspace}$ and $\Mta{1}{\uspacepure}$.
\begin{cor}\label{cor:dec_pps:convexity}
The sets  $\ocmtn{\infty}{1} \l( \Mta{1}{\uspace} \r)$ and $\ocmtn{\infty}{1} \l( \Mta{1}{\uspacepure} \r)$ are convex. 
\end{cor}
\begin{proof}
Follows from \eqref{eq:dec_pps:pmuhtat} and the convexity of $\Mta{1}{\uspace}$ and $\Mta{1}{\uspacepure}$.\qedhere
\end{proof}

Finally, we establish two key equivalence relations (in terms of achievable occupation measures) within decentralized policy-profiles and their mixtures. These relations are stated in Theorems~\ref{thm:dec_pps:ocm} and \ref{thm:dec_pps:ocm_3}.

\begin{thm}\label{thm:dec_pps:ocm}
We have:
\begin{align*}
	\ocmtn{\infty}{1} \! \l( \uspace \r) \!=\! \ocmtn{\infty}{1} \! \l( \prod_{n=1}^{N} \! \Mta{1}{\unspace{n}} \! \r) \! = \! \ocmtn{\infty}{1} \! \l( \prod_{n=1}^{N} \! \Mta{1}{\unspacepure{n}} \! \r) \!. 
\end{align*}
\end{thm}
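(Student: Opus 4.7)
The plan is to prove both equalities in the chain separately, using the (obvious) easy inclusion on one side and a Kuhn-theorem-style construction on the other; throughout I will make use of the formula for $\prup{\mu}{P_1}(\hst{h}{t}, \at{t})$ derived in \eqref{eq:dec_pps:pmuhtat}.

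\textbf{First equality.} To establish $\ocmtn{\infty}{1}(\uspace) = \ocmtn{\infty}{1}\l(\prod_{n=1}^{N} \Mta{1}{\unspace{n}}\r)$, the inclusion ``$\subseteq$'' will be obtained by identifying each $u \in \uspace$ with the Dirac product $\bigotimes_{n=1}^{N} \delta_{\un{n}} \in \prod_{n=1}^{N} \Mta{1}{\unspace{n}}$. For the reverse direction, I will start from an arbitrary $\mu = \bigotimes_{n=1}^{N} \mu_n$ and construct an equivalent behavioral profile $\wtd{u} \in \uspace$ by defining $\wtd{u}_t^{(n)}(\cdot \mid \hstn{h}{t}{0,n})$ to be the conditional distribution of $\Atn{t}{n}$ given $\Hstn{t}{0,n} = \hstn{h}{t}{0,n}$ under $\mu_n$ (and an arbitrarily chosen distribution on zero-probability histories). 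A telescoping computation then yields the key identity
\[
\prod_{t'=1}^{t} \wtd{u}_{t'}^{(n)} \l( \atn{t'}{n} \mid \hstn{h}{t'}{0,n} \r) = \int_{\unspace{n}} \prod_{t'=1}^{t} \utn{t'}{n} \l( \atn{t'}{n} \mid \hstn{h}{t'}{0,n} \r)\, \mu_n\l(du^{(n)}\r),
\]
and substituting this into the product-mixture analogue of \eqref{eq:dec_pps:pmuhtat} gives $\prup{\wtd{u}}{P_1}(\hst{h}{t}, \at{t}) = \prup{\mu}{P_1}(\hst{h}{t}, \at{t})$ for every $t, \hst{h}{t}, \at{t}$, so the two interaction mechanisms generate the same occupation measure.

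\textbf{Second equality.} For $\ocmtn{\infty}{1}\l(\prod_{n=1}^{N} \Mta{1}{\unspace{n}}\r) = \ocmtn{\infty}{1}\l(\prod_{n=1}^{N} \Mta{1}{\unspacepure{n}}\r)$, the ``$\supseteq$'' direction is immediate from $\unspacepure{n} \subseteq \unspace{n}$. For ``$\subseteq$'', I will compose the first-equality step with a per-agent Kuhn-style expansion: given $\mu \in \prod_{n=1}^{N} \Mta{1}{\unspace{n}}$, first pass to the equivalent $\wtd{u} \in \uspace$; then, for each agent $n$, exploit countability of $\hsnspace{n}$ (observations are countable, actions are finite) to identify $\unspacepure{n}$ with the countable product $\prod_{h \in \hsnspace{n}} \anspace{n}$ and define $\nu_n \in \Mta{1}{\unspacepure{n}}$ as the product probability measure whose marginal at coordinate $h \in \hstnspace{t}{0,n}$ is $\wtd{u}_t^{(n)}(\cdot \mid h)$. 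Because the sequence of histories $\hstn{h}{1}{0,n}, \ldots, \hstn{h}{t}{0,n}$ traversed along any trajectory lies on pairwise distinct coordinates (their length strictly increases in $t$), the coordinate-wise independence built into $\nu_n$ forces the joint distribution of the selected actions to factor as $\prod_{t'=1}^{t} \wtd{u}_{t'}^{(n)}(\atn{t'}{n} \mid \hstn{h}{t'}{0,n})$, which matches the behavioral-policy probability. Therefore $\nu = \bigotimes_{n=1}^{N} \nu_n$ induces the same occupation measure as $\wtd{u}$, and hence as $\mu$.

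The main obstacle is the careful measure-theoretic setup in both constructions: in the first, ensuring that the conditional-distribution definition of $\wtd{u}$ gives a valid decision rule with a consistent extension to zero-probability histories; and in the second, handling the countably-indexed product measure on $\unspacepure{n}$, which rests on countability of $\hsnspace{n}$ together with the ``perfect recall'' observation that sequentially visited histories occupy pairwise distinct coordinates of $\unspacepure{n} \cong \prod_{h \in \hsnspace{n}} \anspace{n}$. Once these structural points are handled, the equalities of occupation measures reduce to direct telescoping and independence calculations substituted into \eqref{eq:dec_pps:pmuhtat}.
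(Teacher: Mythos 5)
Your proposal is correct and matches the approach the paper takes for the one piece of this theorem it actually writes out: the paper omits the proof of Theorem~\ref{thm:dec_pps:ocm} but proves Lemma~\ref{lem:dec_pps:ocm_2b}, whose construction (a product measure on $\unspacepure{n} \cong \prod_{h \in \hsnspace{n}} \anspace{n}$ with coordinate marginals given by the behavioral decision rules, built via the Kolmogorov extension theorem and justified by the fact that sequentially visited histories occupy distinct coordinates) is exactly your second-equality argument. Your remaining steps---the Dirac embedding and the Kuhn-style telescoping that collapses a product mixture $\bigotimes_n \mu_n$ to a single behavioral profile using perfect recall (each $\Hstn{t}{0,n}$ records agent $n$'s past actions, so the conditional ratios are policy-only quantities and telescope correctly even past zero-probability histories)---are the standard ingredients the omitted proof would require, and they are sound as stated.
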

\begin{proof}
	The proof follows from Lemmas~\ref{lem:dec_pps:ocm_1} and \ref{lem:dec_pps:ocm_2b}.\qedhere
\end{proof}
\begin{lem}\label{lem:dec_pps:ocm_1}
	We have:
    \begin{align*}
        &\ocmtn{\infty}{1}\l( \uspace \r) \supseteq  \ocmtn{\infty}{1}\l(  \prod_{n=1}^{N} \Mta{1}{\unspace{n}}  \r)  \\
        &\hspace{100pt} \supseteq  \ocmtn{\infty}{1} \l( \! \prod_{n=1}^{N} \Mta{1}{\unspacepure{n}}  \r) \!.
    \end{align*}
\end{lem}
\begin{proof}
	The second inclusion is trivial. To see the first inclusion, fix a product mixture of decentralized behavioral policy-profiles $\udl{\mu} \in \prod_{n=1}^{N} \Mta{1}{\unspace{n}} $. We want to show that there exists a decentralized behavioral policy-profile $\udl{u}=\udl{u}(\udl{\mu}) \in \uspace$, such that 
	\begin{align*}
		\ocmtn{\infty}{1}\l( \udl{\mu} \r) = \ocmtn{\infty}{1} \l( \udl{u} \r).
	\end{align*}
	To that end, define $\udl{u}$ such that
	\begin{align*}
    \begin{split}
		&\qt{\udl{u}}{t} \l( \at{t} | \hst{h}{t}\ \r) = \prod_{n=1}^{N} 
		\qtn{\udl{u}}{t}{n} 
		\l( \atn{t}{n} | \hstn{h}{t}{0, n} \r) \\ 
		& = \begin{cases}
			\frac{ \prup{\udl{\mu}}{P_1} \l( \hst{h}{t}, \at{t} \r)
			}{\prup{\udl{\mu}}{P_1} \l( \hst{h}{t} \r)} &\text{if } \prup{\udl{\mu}}{P_1} \l( \hst{h}{t} \r) \ne 0;\\
			\prod_{n=1}^{N} \frac{1}{|\anspace{n}|} &\text{otherwise}.
		\end{cases}
    \end{split}\numberthis\label{eq:dec_pps:dominance:u}
	\end{align*}
	The above assignment is correct because the right hand side in \eqref{eq:dec_pps:dominance:u} is a fully factorized function of $\atn{t}{n}$'s. To see this, note that $\udl{\mu}$ is a product measure. Thus, using Tonelli's Theorem 
    in \eqref{eq:dec_pps:pmuhtat} gives
	\begin{align*}
		\prup{\udl{\mu}}{P_1} \l( \hst{h}{t}, \at{t} \r) 
		&=P_1\l( \sspace, \hst{h}{1} \r) 
		\prod_{t'=2}^{t} \pr_{P_1} \l( \ot{t'} | \hst{h}{t'-1}, \at{t'-1} \r) \\
		& \hspace{-30pt} \times \prod_{n=1}^{N} \int_{\unspace{n}}  \prod_{t'=1}^{t} \utn{t'}{n}\l( \atn{t'}{n} |  \hstn{h}{t'}{0, n} \r) \qn{\udl{\mu}}{n} \l( d\un{n}\r),\\
		\prup{\udl{\mu}}{P_1} \l( \hst{h}{t} \r)
		&=P_1\l( \sspace, \hst{h}{1} \r) 
		\prod_{t'=2}^{t} \pr_{P_1} \l( \ot{t'} | \hst{h}{t'-1}, \at{t'-1} \r) \\
		&\hspace{-30pt} \times \prod_{n=1}^{N} \int_{\unspace{n}}  \prod_{t'=1}^{t-1} \utn{t'}{n}\l( \atn{t'}{n} |  \hstn{h}{t'}{0, n} \r) \qn{\udl{\mu}}{n}\l( d\un{n}\r).
	\end{align*}
	Thus, 
	\begin{align*}
		&\frac{ \prup{\udl{\mu}}{P_1} \l(\hst{h}{t}, \at{t} \r)
		}{\prup{\udl{\mu}}{P_1} \l( \hst{h}{t} \r) } \\
		&\hspace{5pt} =
		\frac{ \prod_{n=1}^{N} \int_{\unspace{n}} \prod_{t'=1}^{t} \utn{t'}{n}\l( \atn{t'}{n} |  \hstn{h}{t'}{0, n} \r) \qn{\udl{\mu}}{n} \l( d\un{n}\r) }{ \prod_{n=1}^{N} \int_{\unspace{n}} \prod_{t'=1}^{t-1} \utn{t'}{n}\l( \atn{t'}{n} |  \hstn{h}{t'}{0, n} \r) \qn{\udl{\mu}}{n} \l( d \un{n}\r) },
	\end{align*}
	where both the numerator and denominator are products of some functions of the form $f^{(n)}\l( \atn{t}{n}, \hstn{h}{t}{0, n} \r) $. We now prove, by forward induction, that for all $t\in\mbb{N}$, $\udl{u}$ and $\udl{\mu}$ induce the same distribution on pairs of the form $\l( \Hst{t}, \At{t} \r)$.
		
		\smallskip\smallskip
		\noindent \udl{\textit{Base Case}}: For time $t=1$, let $\hst{h}{1} \in \hstspace{1} = \ospace$ and $\at{1} \in \aspace$. We have
		\begin{align*}
			\prup{\udl{\mu}}{P_1} \l(\hst{h}{1}, \at{1} \r) &= P_1\l( \sspace, \hst{h}{1} \r) \int_{\uspace} \udl{\mu} \l( du \r) \ut{1}\l( \at{1} | \hst{h}{1} \r),
		\end{align*}
		and
		\begin{align*}
			\prup{\udl{u}}{P_1} \l( \hst{h}{1}, \at{1} \r)
			&= P_1\l( \sspace, \hst{h}{1} \r) \qt{\udl{u}}{1}\l( \at{1} | \hst{h}{1} \r) \\
			&= P_1\l( \sspace, \hst{h}{1} \r) 
			\frac{\prup{\udl{\mu}}{P_1} \l( \hst{h}{1}, \at{1} \r)}{\prup{\udl{\mu}}{P_1} \l( \hst{h}{1} \r) }
			=\prup{\udl{\mu}}{P_1} \l( \hst{h}{1}, \at{1} \r),
		\end{align*}
		where the last step holds as $ \prup{\udl{\mu}}{P_1} \l( \hst{h}{1} \r) = P_1\l( \sspace, \hst{h}{1} \r) $.
		
		\smallskip\smallskip
		\noindent \udl{\textit{Induction Step}}. 
		Assuming that the statement is true for time $t$, we show that it is true for time $t+1$. Let $\hst{h}{t+1} = \l( \ot{1:t+1}, \at{1:t} \r) = \l( \hst{h}{t}, \at{t}, \ot{t+1} \r) \in \hstspace{t+1}$ and $\at{t+1} \in \aspace$. We have
		\begin{align*}
			\prup{\udl{\mu}}{P_1} \l( \hst{h}{t+1} \r) 
			&\hspace{0pt}= \prup{\udl{\mu}}{P_1} \l(\hst{h}{t}, \at{t} \r) \pr_{P_1} \l( \ot{t+1} \big| \hst{h}{t}, \at{t} \r)
						\\&\hspace{0pt}
			\labelrel{=}{eqr:dominance:ind} \prup{\udl{u}}{P_1} \l( \hst{h}{t}, \at{t} \r)
			\pr_{P_1} \l( \ot{t+1} | \hst{h}{t}, \at{t} \r)
						\\&\hspace{0pt}
			=\prup{\udl{u}}{P_1} \l( \hst{h}{t+1} \r),
		\end{align*}
		where, \eqref{eqr:dominance:ind} uses the inductive hypothesis. The above equality along with the definition of $\udl{u}$ in \eqref{eq:dec_pps:dominance:u} implies 
		\begin{align*}
			\prup{\udl{u}}{P_1} \l( \hst{h}{t+1}, \at{t+1} \r)
			\hspace{0pt} 
			&\!=\! \prup{\udl{u}}{P_1} \l( \hst{h}{t+1} \r) \qt{\udl{u}}{t+1} \l(\at{t+1} | \hst{h}{t+1} \r) 
						\\ \hspace{0pt}
			&\!=\! \prup{\udl{\mu}}{P_1} \l( \hst{h}{t+1} \r) \frac{\prup{\udl{\mu}}{P_1} \l( \hst{h}{t+1}, \at{t+1} \r) }{\prup{\udl{\mu}}{P_1} \l( \hst{h}{t+1} \r)}
						\\ \hspace{0pt} 
			&\!=\! \prup{\udl{\mu}}{P_1} \l( \hst{h}{t+1}, \at{t+1} \r).  
		\end{align*}
	This completes the proof.\qedhere
\end{proof}

\begin{lem}\label{lem:dec_pps:ocm_2b}
We have: 
$$ \ocmtn{\infty}{1} \l( \prod_{n=1}^{N} \Mta{1}{\unspacepure{n}} \r) \supseteq \ocmtn{\infty}{1} \l( \uspace \r).$$
\end{lem}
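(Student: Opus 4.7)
The plan is a Kuhn's-theorem-style construction. Given an arbitrary $u = u^{(1:N)} \in \uspace$, I will, for each agent $n$, build a Borel probability measure $\mu_n \in \Mta{1}{\unspacepure{n}}$ by independently randomizing agent $n$'s action at each of her private-common histories $h \in \hsnspace{n}$, and then exhibit $(\mu_1, \dots, \mu_N) \in \prod_{n=1}^N \Mta{1}{\unspacepure{n}}$ as a mechanism matching the occupation measure of $u$. Concretely, $\unspacepure{n}$ is in bijection with the countable product of finite sets $\prod_{h \in \hsnspace{n}} \anspace{n}$, which is a compact Polish space under the product topology; I set $\mu_n$ to be the product Borel measure whose coordinate marginal at each $h \in \hstnspace{t}{0,n}$ is $\utn{t}{n}(\cdot \mid h)$. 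Because $\hsnspace{n}$ is countable and $\anspace{n}$ finite, this $\mu_n$ is well-defined with no measurability subtlety, and the map sending a sample $V_n \sim \mu_n$ to its coordinate $V_n(h) \in \anspace{n}$ is automatically Borel.

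To verify the matching of occupation measures, I apply formula \eqref{eq:dec_pps:pmuhtat} with $\mu_i$ replaced by $\mu \defeq \mu_1 \otimes \cdots \otimes \mu_N$, which is supported on $\uspacepure \subseteq \uspace$. The integrand factorizes across $n$ by the product structure of $\mu$, and on $\unspacepure{n}$ the factor $\utn{t'}{n}(\atn{t'}{n} \mid \hstn{h}{t'}{0,n})$ becomes the indicator $\11\l[ V_n(\hstn{h}{t'}{0,n}) = \atn{t'}{n} \r]$, so the integral reduces to
\begin{align*}
\prod_{n=1}^{N} \int_{\unspacepure{n}} \prod_{t'=1}^{t} \11\l[ V_n(\hstn{h}{t'}{0,n}) = \atn{t'}{n} \r] \, \mu_n(dV_n).
\end{align*}
The key observation is that the $t$ histories $\{\hstn{h}{t'}{0,n}\}_{t'=1}^{t}$ are pairwise distinct---they are strictly nested and live in distinct strata $\hstnspace{t'}{0,n}$ of $\hsnspace{n}$---so by the product structure of $\mu_n$ the $t$ coordinate-indicators are independent, and each factor collapses to $\prod_{t'=1}^t \utn{t'}{n}(\atn{t'}{n} \mid \hstn{h}{t'}{0,n})$. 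Reassembling with the prefactor and comparing with $\prup{u}{P_1}(\hst{h}{t}, \at{t})$ computed via \eqref{eq:dec_pps:pmuhtat} and \eqref{eq:dec_pps:uah} then shows $\prup{\mu}{P_1}(\hst{h}{t}, \at{t}) = \prup{u}{P_1}(\hst{h}{t}, \at{t})$ for every $(t, \hst{h}{t}, \at{t})$, whence $\ocmtn{\infty}{1}(\mu) = \ocmtn{\infty}{1}(u)$, and the inclusion follows because $u$ was arbitrary.

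The only substantive step is the disjointness-of-histories argument that licenses independence of the per-time-step randomizations within each $\mu_n$; this is the perfect-recall condition of Kuhn's theorem, which here holds trivially because $\hstn{h}{t'}{0,n}$ encodes its time index $t'$ via length, so no private-common history is ever revisited along any play. Everything else is measure-theoretic bookkeeping with countable products of finite probability spaces, and the interchange of product and integral in \eqref{eq:dec_pps:pmuhtat} is immediate by Fubini once $\mu$ is recognized as a product measure.
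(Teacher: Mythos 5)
Your proposal is correct and takes essentially the same route as the paper: the paper also builds each $\mu_n$ by independently randomizing agent $n$'s action at every history according to $\utn{t}{n}(\cdot\mid h)$ (packaged there via independent $\delta$-valued random variables and the Kolmogorov extension theorem rather than a direct product measure on $\prod_{h}\anspace{n}$, which is the same object), takes $\udl{\mu}=\otimes_n \mu_n$, and matches occupation measures through \eqref{eq:dec_pps:pmuhtat}. Your explicit justification that the histories $\hstn{h}{t'}{0,n}$ visited along a play are distinct coordinates (so the indicators factor under the product measure) is the step the paper leaves as ``it is then clear,'' and it is the right one.
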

\begin{proof}
	Fix a decentralized behavioral policy-profile $\udl{u} \in \uspace $. For each $n \in [N]$, let $$\qn{e}{n}: \mbb{N} \ra \bigcup_{t=1}^{\infty} \{t\} \times \hstnspace{t}{0, n}$$ 
	be a bijective mapping and consider a collection of mutually independent random variables $\l\{ \qtn{Y}{i}{n}: i \in \mbb{N} \r\}$ such that each $\qtn{Y}{i}{n}$ takes value $\delta_{\an{n}} $ with probability 
	$\qtn{\udl{u}}{t}{n} \l( \an{n} \big| \hstn{h}{t}{0, n} \r)  $ where $\l(t, \hstn{h}{t}{0, n} \r) = \qn{e}{n} \l( i \r)$. From the Kolmogorov extension theorem, it follows that there exists a unique probability measure $\qn{\udl{\mu}}{n} \in \Mta{1}{\unspacepure{n}} $ such that for all $t \in \mbb{N}$ and $\l(\otn{1:t}{0,n} \atn{1:t}{n}\r) \in \hstnspace{t}{0,n} \times \anspace{n}$, we have:
	\begin{align*}
		&\qn{\udl{\mu}}{n} \Big( \Big\{ \un{n} \in \unspacepure{n}: \utn{t'}{n} \Big( \otn{1:t'}{0,n}, \atn{1:t'-1}{n} \Big) =\delta_{\atn{t'}{n}} \\
       &\hspace{14em}  \text{ for all } t' \in [1, t]_{\mbb{Z}} \Big\} \Big) \\
		& = 
		\prod_{t'=1}^{t} \qtn{\udl{u}}{t'}{n} \l( \atn{t'}{n} | \otn{1:t'}{0,n}, \atn{1:t'-1}{n} \r).
	\end{align*}
	Let $\udl{\mu} = \otimes_{n=1}^N \qn{\udl{\mu}}{n}$. From \eqref{eq:dec_pps:pmuhtat}, it is then clear that $\ocmtn{\infty}{1}\l( \udl{u} \r) = \ocmtn{\infty}{1} \l( \udl{\mu} \r)$. 
	Since $\udl{u}$ was arbitrary, this concludes the proof. \qedhere
\end{proof}

\begin{thm}\label{thm:dec_pps:ocm_3}
We have:
\begin{align*}
	\ocmtn{\infty}{1} \l( \Mta{1}{\uspace} \r) = \ocmtn{\infty}{1} \l( \Mta{1}{\uspacepure} \r).
\end{align*}
\end{thm}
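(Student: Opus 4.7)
The inclusion $\ocmtn{\infty}{1}(\Mta{1}{\uspacepure}) \subseteq \ocmtn{\infty}{1}(\Mta{1}{\uspace})$ is immediate: $\uspacepure$ is closed in $\uspace$, so any $\nu \in \Mta{1}{\uspacepure}$ extends by zero outside $\uspacepure$ to an element of $\Mta{1}{\uspace}$, and \eqref{eq:dec_pps:pmuhtat} yields the same occupation measure under the extension. For the reverse inclusion, given $\mu \in \Mta{1}{\uspace}$, I would build $\nu \in \Mta{1}{\uspacepure}$ with $\ocmtn{\infty}{1}(\nu) = \ocmtn{\infty}{1}(\mu)$ by purifying pointwise via Lemma~\ref{lem:dec_pps:ocm_2b} and then averaging against $\mu$.

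\textbf{Core construction.} For each $u \in \uspace$, Lemma~\ref{lem:dec_pps:ocm_2b} supplies a product measure $\mu_u = \otimes_{n=1}^{N} \mu_u^{(n)} \in \prod_{n=1}^{N} \Mta{1}{\unspacepure{n}} \subseteq \Mta{1}{\uspacepure}$ satisfying $\prup{\mu_u}{P_1}(\hst{h}{t}, \at{t}) = \prup{u}{P_1}(\hst{h}{t}, \at{t})$ for every $t, \hst{h}{t}, \at{t}$. First I would verify that the map $u \mapsto \mu_u$ is weakly continuous: by the Kolmogorov construction in that lemma, for any finite $F \subseteq \hsnspace{n}$ and any $(a_h)_{h \in F}$,
\begin{align*}
\mu_u^{(n)}\l(\l\{v \in \unspacepure{n} : v(h) = a_h,\, \forall h \in F\r\}\r) = \prod_{h \in F} \utn{t_h}{n}\l(a_h \big| h\r),
\end{align*}
which depends continuously on $u$ under the product topology of $\uspace$ from \eqref{eq:dec_pps:xuspaceandxunspace}. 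Because each $\anspace{n}$ is finite-discrete, such cylinder sets are clopen and form a convergence-determining class on the countable-product space $\uspacepure$; hence $u \mapsto \mu_u$ is weakly continuous and thus $u \mapsto \mu_u(B)$ is Borel measurable for every $B \in \borel{\uspacepure}$. Define $\nu \in \Mta{1}{\uspacepure}$ by $\nu(B) \defeq \int_\uspace \mu_u(B)\, \mu(du)$. Using \eqref{eq:dec_pps:pmuhtat} applied to $\nu$, Fubini's theorem, \eqref{eq:dec_pps:pmuhtat} applied to each $\mu_u$ combined with Lemma~\ref{lem:dec_pps:ocm_2b}, and finally \eqref{eq:dec_pps:pmuhtat} applied to $\mu$:
\begin{align*}
\prup{\nu}{P_1}(\hst{h}{t}, \at{t})
&= \int_\uspace \prup{\mu_u}{P_1}(\hst{h}{t}, \at{t})\, \mu(du) \\
&= \int_\uspace \prup{u}{P_1}(\hst{h}{t}, \at{t})\, \mu(du) \\
&= \prup{\mu}{P_1}(\hst{h}{t}, \at{t}),
\end{align*}
which gives $\ocmtn{\infty}{1}(\nu) = \ocmtn{\infty}{1}(\mu)$ and completes the argument.

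\textbf{Main obstacle and alternative.} The delicate step is justifying measurability of the purification kernel $u \mapsto \mu_u$, since Lemma~\ref{lem:dec_pps:ocm_2b} is stated only pointwise and its Kolmogorov-extension construction does not directly expose $u$-regularity; the topology on $\uspacepure$ being inherited from an infinite product must be leveraged carefully. A cleaner route that bypasses the kernel entirely is to invoke Lemma~\ref{lem:macpomdp:continuityofmeasures_2}: because $\Mta{1}{\uspacepure}$ satisfies Assumptions~\ref{assmp:macpomdp:structured_X}(a,b) by Proposition~\ref{prop:dec_pps:subspace_topologies_2} and Corollary~\ref{cor:dec_pps:pw_continuity}, it suffices to exhibit, for each $i \in \mbb{N}$, some $\nu_i \in \Mta{1}{\uspacepure}$ matching $\mu$ on all $(\hst{h}{t}, \at{t})$ with $t \le i$; this is a finite-horizon restriction of the construction above, which is elementary since only finitely many history-indexed decision marginals of $\mu$ need be reproduced, with arbitrary pure completions thereafter.
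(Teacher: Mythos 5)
Your argument is correct, but it takes a genuinely different route from the paper. The paper proves the nontrivial inclusion by an approximation--compactness argument: it partitions $\uspace$ into finitely many cells of diameter $\le 1/i$, purifies one representative per cell via Lemma~\ref{lem:dec_pps:ocm_2b}, forms the finite convex combination $\mu_i = \sum_j \udl{\mu}(E_{i,j})\,\mu_{i,j} \in \Mta{1}{\uspacepure}$, extracts a convergent subsequence by compactness of $\Mta{1}{\uspacepure}$, shows the limit's occupation measure is within $\eps$ of that of $\udl{\mu}$ (using the uniform continuity of $u \mapsto \prup{u}{P_1}(\hst{h}{t},\at{t})$ from Lemma~\ref{lem:dec_pps:puth}), and then closes the gap with Lemma~\ref{lem:macpomdp:continuityofmeasures_1}. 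You instead build the purifying measure \emph{exactly}, as the mixture $\nu(B) = \int_{\uspace} \mu_u(B)\,\mu(du)$ of the kernel $u \mapsto \mu_u$ extracted from Lemma~\ref{lem:dec_pps:ocm_2b}, and conclude by Fubini. Your treatment of the one delicate point is sound: the kernel is canonical (a product measure with $u$-dependent coordinate marginals), its clopen-cylinder probabilities are finite products of coordinate maps and hence continuous in the product topology of \eqref{eq:dec_pps:xuspaceandxunspace}, cylinders determine weak convergence on a countable product of finite discrete spaces, and a $\pi$--$\lambda$ argument then gives Borel measurability of $u \mapsto \mu_u(B)$ for every $B$, so $\nu$ is well defined and the interchange of integrals is licit. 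What each approach buys: yours is shorter, produces an explicit $\nu$, avoids any appeal to compactness of $\Mta{1}{\uspacepure}$ or to Lemma~\ref{lem:macpomdp:continuityofmeasures_1}, but carries the measurable-kernel burden; the paper's version only ever invokes Lemma~\ref{lem:dec_pps:ocm_2b} pointwise at finitely many representatives, so it needs no regularity of the purification map at all, at the cost of a subsequence/limit argument. Your proposed fallback via Lemma~\ref{lem:macpomdp:continuityofmeasures_2} (finite-horizon matching with arbitrary pure completions) is also viable and is in fact the device the paper uses later for Theorem~\ref{thm:ciapproach:equivalence}, though there you should still say a word about how the finite-horizon marginal of $\mu$ on $\utspacepure{1:i}$-cylinders is reproduced by a finitely supported mixture.
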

\begin{proof}
	See \cite[Theorem 3.4]{nkhanthesis2025}.
\end{proof}

\section{Coordination Policies and their Mixtures}\label{sec:ciapproach}
For technical reasons, in this section, we make the following additional assumption. 
\begin{assumption}[Finite Number of Observations]\label{assmp:ciapproach:finite_observations}
	The joint-observation space $\ospace$ is finite.
\end{assumption}
\subsection{System Model}\label{sec:ciapproach:model}
We will use the coordinator's approach (the common information approach) \cite{nayyar13,nayyar14} to formulate \emph{coordination policies} and their mixtures. These mechanisms rely on a virtual agent, called the \emph{coordinator} who, at any time $t$, chooses a tuple of $N$ mappings, represented as:
\begin{align*}
	\Gammat{t} = \Gammatn{t}{1:N}, \Gammatn{t}{n}: \hstnspace{t}{n} \ra \anspace{n}, n \in [N].
\end{align*}
Here, each $\Gammatn{t}{n}$ acts as a \emph{prescription} for agent $n$---agent $n$ applies it to her private history $\Hstn{t}{n}$ to choose her action. Formally,
\begin{align*}
	\Atn{t}{n} = \Gammatn{t}{n} \l( \Hstn{t}{n} \r).
\end{align*}
Thus, the joint-action $\At{t}$ is given by:
\begin{align*}
	\At{t} = \l( \Gammatn{t}{n}\l(\Hstn{t}{n}\r): n \in [N] \r) \defeqr \Gammat{t}\l(\Hstn{t}{1:N}\r).
\end{align*}
The set of all prescriptions at time $t$ is given by:
\begin{align*}
	\gammatspace{t} \! = \! \gammatnspace{t}{1:N}, \gammatnspace{t}{n} \! \defeq \! \l\{ \gammatn{t}{n}: \hstnspace{t}{n} \ra \anspace{n} \r\}, n \in \! [N].
\end{align*}
Here, noting that $\gammatnspace{t}{n}$, $n \in [N]$ has a one-to-one correspondence with the set $\l(\anspace{n}\r)^{| \hstnspace{t}{n} |}$ which is a compact metric space under the product topology, one ensures that $\gammatspace{t}$ and $\gammatnspace{t}{n}$, $n \in [N]$, all form compact metric spaces. 

Now, we formalize how the coordinator generates her prescriptions. To select $\Gammat{t} \in \gammatspace{t}$, the coordinator uses the common observations $\Hstn{t}{0}$ and her previous prescriptions $\Gammat{1:t-1}$. The coordinator has perfect recall as she retains all past information. The aggregate information, referred to as the \emph{prescription-observation history} at time $t$, is denoted by $\wtHstn{t}{0}$, defined recursively as:
\begin{align*}
	\begin{split}
		\wtHstn{1}{0} &\defeq \Otn{1}{0}, \text{ and } \\
		\wtHstn{t}{0} &\defeq \l( \wtHstn{t-1}{0}, \Gammat{t-1}, \Otn{t}{0} \r) \forall t\in [2,\infty]_{\mbb{Z}}.
	\end{split}
\end{align*}
The set of all possible prescription-observation histories at time $t$ is given by $\wthstnspace{t}{0} = \hstnspace{t}{0} \times \gammatspace{1:t-1} $. When endowed with the product topology, this set forms a compact metric space, enabling the formal definition of coordination policies.
\begin{dfn}[Coordination Policies]\label{dfn:ciapproach:coord_ps}
	A \emph{behavioral coordination policy} is defined as a tuple $v = \vt{1:\infty} \in \vvspace \defeq \vtspace{1:\infty}$, where $\vt{t} \in \vtspace{t}$ is the decision-rule of the coordinator for time $t$. Each $\vt{t}$ is a Borel measurable mapping from $\wthstnspace{t}{0} $ to $\Mta{1}{\gammatspace{t}}$ so that $\Gammat{t} \sim  \vt{t} \l( \wtHstn{t}{0} \r)$. 
	
	Pure coordination policies (in which all prescriptions are generated deterministically) lie inside $\vvspace$. The set of (Borel measurable) pure decision-rules of the coordinator for time $t$ is denoted by $\vtspacepure{t}$, and the set of all pure coordination policies is denoted by $\vvspacepure  =  \vtspacepure{1:\infty} $.	
\end{dfn}
The system formed when agents follow a coordination policy is referred to as the \emph{coordinated system}. 
%
For any $v\in\vvspace$, let $\prup{v}{P_1} = \prup{v}{P_1, \mcl{P}_{tr}} $ be the unique probability measure\footnote{See the Ionesca-Tulcea theorem.} on the set of all infinite trajectories, namely $\prod_{t=1}^{\infty} \l(\sspace \times \ospace \times \gammatspace{t} \times \aspace\r) $, endowed with the Borel $\sigma$-algebra, and let $\E{v}{P_1} = \E{v}{P_1, \mcl{P}_{tr}}$ be the corresponding expectation operator. Since, by construction, all agents follow the coordinator's prescriptions, we have the following strategic independence property: for all $t\in \mbb{N}$, 
\begin{align*}
\begin{split}
	& \prup{v}{P_1} \l( \hstn{h}{t}{1:N}, \at{t} \big| \wthstn{t}{0}, \gammat{t} \r) = \\
    & \qquad \pr_{P_1} \l( \hstn{h}{t}{1:N} \big| \wthstn{t}{0}  \r)
     \indevent{\at{t} = \gammat{t} \l( \hstn{h}{t}{1:N} \r) } .
\end{split}
\end{align*}
This property and  \eqref{eq:macpomdp:conditional_independence}  
ensure that 
for all $t\in \mbb{N}$,
\begin{align*}
	\begin{split}
		 \E{v}{P_1} \l[ \cCost \big| \wthstn{t}{0}, \gammat{t} \r] 
		 & = 
        \cc \l( t, \wthstn{t}{0}, \gammat{t} \r) , \text{ and } \\
		\E{v}{P_1} \l[ \dtCost{k} \big| \wthstn{t}{0}, \gammat{t} \r] 
		& 
        = \dct{k} \l( t, \wthstn{t}{0}, \gammat{t} \r), k \in [K].
	\end{split}
\end{align*}

To ensure $\vvspace$ can form a compact metric space, we note its one-to-one correspondence with the set
\begin{align*}
	\mcl{X}_{\vvspace} = \prod_{ \wthsn{0} \in \bigcup_{t=1}^{\infty} \wthstnspace{t}{0}  } 
	\Mta{1}{\gammatspace{t}; \wthsn{0} }.\numberthis\label{eq:ciapproach:xvspace}
\end{align*}
Since each $\Mta{1}{\gammatspace{t}; \wthsn{0}}$ is a compact metric space under the topology of weak convergence\footnote{This follows from Prokhorov's theorem.}, by endowing $\xtspace{\vvspace}$ with the product topology, one ensures that $\vvspace$ can form a compact metric space under Assumption~\ref{assmp:ciapproach:finite_observations} (which ensures each $\wthstnspace{t}{0}$ is countable (actually finite)). With this choice, the set $\vvspacepure$ remains closed in $\vvspace$, and thus, is a compact metric space as well. Finally, we also introduce the set $\Mta{1}{\vvspace}$, which (again by Prokhorov's theorem) is a compact metric space under the topology of weak convergence. Also, because $\vvspacepure$ is closed in $\vvspace$, $\Mta{1}{\vvspacepure} \subseteq \Mta{1}{\vvspace} $ forms a compact metric space under the subspace topology. 

The space $\Mta{1}{\vvspace}$ contains \emph{mixtures of coordination policies} and we denote its typical element by $\nu$. The decision making interpretation of $\Mta{1}{\vvspace}$ is similar to that discussed for $\Mta{1}{\uspace}$ in Section~\ref{sec:dec_pps}---the system trajectory 
now includes time-step 0 where the (random) coordination policy, $V$, is realized. Let $\prup{\nu}{P_1} = \prup{\nu}{P_1, \mcl{P}_{tr}}$ denote the unique probability measure on the set of all infinite trajectories, namely $\vvspace \times \prod_{t=1}^{\infty} (\sspace \times \ospace \times \gammatspace{t} \times \aspace)$, endowed with Borel $\sigma$-algebra, and let $\E{\nu}{P_1} = \E{\nu}{P_1, \mcl{P}_{tr}}$ be the corresponding expectation operator. Then, we have:
\begin{align*}
	\Ctn{T}{\alpha}(\nu) = \E{\nu}{P_1} \l[ \Ctn{T}{\alpha}\l(V\r) \r],  
	\Dtn{T}{\alpha}(\nu) = \E{\nu}{P_1} \l[ \Dtn{T}{\alpha}\l(V\r) \r].
\end{align*}
We note that, with the exception of $\vvspacepure$, coordination policies and their mixtures, like joint mixtures of decentralized policy-profiles, require common randomness.

\subsection{Analytic and Structural Results}\label{sec:ciapproach:duality}
When the number of observations is finite, the set of prescriptions available to the coordinator at any time is also finite. Then, the coordinated system (a cooperative multi-agent system) can be viewed as a single agent system with finite action spaces, and the following proposition easily follows.

\begin{prop}\label{prop:ciapproach:ocm_properties}
Let $X$ denote one of the spaces, $
\vvspace $, $\Mta{1}{\vvspace}$, $\Mta{1}{\vvspacepure}$. Then, under Assumption~\ref{assmp:ciapproach:finite_observations}, the following statements hold:
\begin{enumerate}
%
	
	\item[(A)] Each component of the mapping,
	$$\ocmtn{\infty}{1} (\cdot): x \in X \mapsto \ocmtn{\infty}{1} \l( x \r),$$
	is pointwise continuous. Thus, by Theorem~\ref{thm:macpomdp:duality_W}(B), for all $(T, \alpha)$, $\ocmtn{T}{\alpha}\l( X\r)$ is a compact subspace of $\Mta{m(T, \alpha)}{\bigcup_{t=1}^{T} \l(\hstspace{t}\times\aspace \r)}$.
	
	\item[(B)] The set $ \ocmtn{\infty}{1} \l( X \r)$ is convex, and 
	\begin{align*}
		\ocmtn{\infty}{1} \l( X \r) = \ocmtn{\infty}{1} \l( \vvspace \r).
	\end{align*}
\end{enumerate}
\end{prop}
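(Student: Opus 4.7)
The plan is to handle the four parts in order, leveraging Assumption~\ref{assmp:ciapproach:finite_observations} to make all per-stage coordinator decision spaces finite, and viewing the coordinated system as a single-agent system with perfect recall in which the ``private'' history is the prescription-observation history $\wtHstn{t}{0}$.

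For part~(A), I would mimic the proofs of Lemmas~\ref{lem:dec_pps:puth} and~\ref{lem:dec_pps:pmuth}. For $v\in\vvspace$, write
\begin{align*}
&\prup{v}{P_1}\l(\wthstn{t}{0}, \gammat{t}\r) \\
&\quad = P_1\l(\sspace, \otn{1}{0}\r) \prod_{t'=2}^{t} \pr_{P_1}\l( \otn{t'}{0} \big| \wthstn{t'-1}{0}, \gammat{t'-1} \r) \prod_{t'=1}^{t} \vt{t'}\l(\wthstn{t'}{0}\r)\l(\gammat{t'}\r),
\end{align*}
which is a finite product of functions continuous in $v$ under the product topology of $\xtspace{\vvspace}$. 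For $\nu \in \Mta{1}{\vvspace}$, express $\prup{\nu}{P_1}\l(\wthstn{t}{0},\gammat{t}\r)$ as the integral of the above (a bounded continuous function of $v$) against $\nu$ and apply the definition of weak convergence. The subcase $X = \Mta{1}{\vvspacepure}$ follows directly since it inherits the subspace topology.

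For part~(B), convexity of $\ocmbtn{\infty}{1}\l(\Mta{1}{\vvspace}\r)$ is immediate because $\nu \mapsto \prup{\nu}{P_1}\l(\wthstn{t}{0},\gammat{t}\r)$ is affine in $\nu$. The equality assertion will follow from the circular chain
\begin{align*}
\ocmbtn{\infty}{1}\l(\Mta{1}{\vvspacepure}\r) \subseteq \ocmbtn{\infty}{1}\l(\Mta{1}{\vvspace}\r) \subseteq \ocmbtn{\infty}{1}\l(\vvspace\r) \subseteq \ocmbtn{\infty}{1}\l(\Mta{1}{\vvspacepure}\r).
\end{align*}
The first inclusion is trivial. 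The second---Kuhn's theorem for a single agent with perfect recall---I would prove by defining, for each $\nu$, a behavioral coordination policy $v^\star \in \vvspace$ via
\begin{align*}
v^\star_t\l(\wthstn{t}{0}\r)\l(\gammat{t}\r) := \prup{\nu}{P_1}\l( \Gammat{t} = \gammat{t} \big| \wtHstn{t}{0} = \wthstn{t}{0} \r)
\end{align*}
(defined arbitrarily at prescription-observation histories of zero $\nu$-probability) and verifying by induction on $t$ that $\prup{v^\star}{P_1}\l(\wthstn{t}{0},\gammat{t}\r) = \prup{\nu}{P_1}\l(\wthstn{t}{0},\gammat{t}\r)$ for every $\l(\wthstn{t}{0},\gammat{t}\r)$. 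The third inclusion mirrors Theorem~\ref{thm:dec_pps:ocm_3}: partition $\vvspace$ into finitely many small-diameter Borel sets, replace the behavioral policies on each piece by suitable mixtures of pure coordination policies (via the single-agent analog of Lemma~\ref{lem:dec_pps:ocm_2b} applied to the coordinator), and close the gap using compactness of $\Mta{1}{\vvspacepure}$ together with Lemma~\ref{lem:macpomdp:continuityofmeasures_1}.

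For parts~(C) and~(D), the bridge between the two levels of occupation measures is the identity
\begin{align*}
\prup{x}{P_1}\l(\hst{h}{t}, \at{t}\r) = \sum_{ (\wthstn{t}{0}, \gammat{t}) \,\mathrm{consistent}\,\mathrm{with}\, (\hst{h}{t},\at{t}) } \prup{x}{P_1}\l(\wthstn{t}{0}, \gammat{t}\r),
\end{align*}
where consistency means that the common-observation part of $\wthstn{t}{0}$ matches that of $\hst{h}{t}$ and that the prescriptions $\gammat{1:t}$ applied to the private-history segments of $\hst{h}{t}$ yield $\at{1:t}$. Under Assumption~\ref{assmp:ciapproach:finite_observations} this sum is finite, so continuity in~(C) is inherited from~(A), while convexity and the equality $\ocmtn{\infty}{1}\l(X\r) = \ocmtn{\infty}{1}\l(\vvspace\r)$ in~(D) follow from~(B) because $\ocmtn{\infty}{1}(x)$ is a coordinate-wise finite linear pushforward of $\ocmbtn{\infty}{1}(x)$. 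The main obstacle will be the Kuhn-style construction of $v^\star$ in~(B): one must handle prescription-observation histories on which the defining conditional is undefined and check that the inductive step, which propagates equality of the joint distributions of $\l(\wtHstn{t}{0}, \Gammat{t}\r)$ under $\nu$ versus under $v^\star$, correctly accounts for how the past prescriptions interact with the transition kernel governing the next common observation.
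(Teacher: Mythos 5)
Your overall route is the one the paper intends but leaves unwritten: the text preceding Proposition~\ref{lem:ciapproach:innerproduct_rep} notes that under Assumption~\ref{assmp:ciapproach:finite_observations} the coordinated system is a single-agent system with finite per-stage prescription sets, and parts~(A) and~(B) are then the single-agent specializations of Lemmas~\ref{lem:dec_pps:puth}, \ref{lem:dec_pps:pmuth}, Corollary~\ref{cor:dec_pps:pw_continuity}, Lemma~\ref{lem:dec_pps:ocm_2b} and Theorems~\ref{thm:dec_pps:ocm} and~\ref{thm:dec_pps:ocm_3} (for a single agent, product mixtures coincide with joint mixtures, so the single-agent version of Theorem~\ref{thm:dec_pps:ocm} already delivers your whole chain of inclusions, Kuhn step included). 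Your factorization of $\prup{v}{P_1}\l(\wthstn{t}{0},\gammat{t}\r)$, the affine-in-$\nu$ convexity argument, and the partition-plus-compactness closing step are faithful transcriptions of that machinery, so (A) and (B) are fine.

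The step that fails as literally written is the bridging identity for parts~(C) and~(D). The pair $\l(\wthstn{t}{0},\gammat{t}\r)$ does not record the agents' private observations, so $\prup{x}{P_1}\l(\wthstn{t}{0},\gammat{t}\r)$ already marginalizes over \emph{all} private-observation sequences compatible with $\wthstn{t}{0}$; summing these probabilities over the ``consistent'' pairs therefore overcounts and does not equal $\prup{x}{P_1}\l(\hst{h}{t},\at{t}\r)$. The correct relation, which is exactly the strategic-independence property displayed in Section~\ref{sec:ciapproach:model}, is
\begin{align*}
\prup{x}{P_1}\l(\hst{h}{t},\at{t}\r)=\sum_{\gammat{1:t}}\prup{x}{P_1}\l(\wthstn{t}{0},\gammat{t}\r)\,\pr_{P_1}\l(\hstn{h}{t}{1:N}\,\big|\,\wthstn{t}{0}\r)\prod_{t'=1}^{t}\indevent{\at{t'}=\gammat{t'}\l(\hstn{h}{t'}{1:N}\r)},
\end{align*}
where $\wthstn{t}{0}$ is formed from the common-observation part of $\hst{h}{t}$ together with $\gammat{1:t-1}$. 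The weights $\pr_{P_1}\l(\hstn{h}{t}{1:N}\,\big|\,\wthstn{t}{0}\r)$ are policy-independent and the sum remains finite under Assumption~\ref{assmp:ciapproach:finite_observations}, so your intended conclusion survives intact: $\ocmtn{\infty}{1}(x)$ is a fixed, coordinate-wise finite, nonnegative linear image of $\ocmbtn{\infty}{1}(x)$, and (C), (D) follow from (A), (B) exactly as you argue. But the coefficients are conditional probabilities rather than indicators, so the identity must be corrected before the ``pushforward'' language is justified.
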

 \begin{proof}See \cite[Proposition 4.2]{nkhanthesis2025}.\qedhere 
		
		
		
 \end{proof}

Finally, letting $X$ denote $\vvspace$, $\Mta{1}{\vvspace}$ or $\Mta{1}{\vvspacepure}$ and letting $Y$ denote $\Mta{1}{\uspace}$ or $\Mta{1}{\uspacepure}$, 
we show\footnote{The proof of Theorem~\ref{thm:ciapproach:equivalence} also yields an alternative proof for the equivalence in \cite{nayyar13}.} that the set of occupation measures of $X$ and $Y$ are the same.
\begin{thm}\label{thm:ciapproach:equivalence}
	Under Assumption~\ref{assmp:ciapproach:finite_observations}, we have:
	\begin{align*}
		\ocmtn{\infty}{1} \l( \Mta{1}{\uspacepure} \r) = \ocmtn{\infty}{1} \l( \Mta{1}{\vvspacepure} \r).\numberthis\label{eq:ciapproach:ocms_m1upure_and_m1vpure}
	\end{align*}
\end{thm}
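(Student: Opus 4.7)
The plan is to lift the known pointwise equivalence between pure decentralized policy-profiles and pure coordination policies (the coordinator's-approach result of \cite{nayyarMT2011,nayyarAT2013}) to the level of their Borel mixtures via push-forward measures. Specifically, I would construct two Borel-measurable maps $\phi: \uspacepure \to \vvspacepure$ and $\psi: \vvspacepure \to \uspacepure$ such that $\prup{u}{P_1}(\hst{h}{t},\at{t}) = \prup{\phi(u)}{P_1}(\hst{h}{t},\at{t})$ for every $u \in \uspacepure$, and $\prup{v}{P_1}(\hst{h}{t},\at{t}) = \prup{\psi(v)}{P_1}(\hst{h}{t},\at{t})$ for every $v \in \vvspacepure$, at every $t, \hst{h}{t}, \at{t}$. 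Then for $\mu \in \Mta{1}{\uspacepure}$ the push-forward $\phi_* \mu \in \Mta{1}{\vvspacepure}$ will inherit the occupation-measure of $\mu$, and symmetrically for $\psi_* \nu$ when $\nu \in \Mta{1}{\vvspacepure}$.

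For $u \in \uspacepure$, define $\phi(u) \in \vvspacepure$ as the pure coordination policy whose time-$t$ decision-rule, at a prescription-observation history $\wthstn{t}{0}$ with embedded common-observation sequence $\otn{1:t}{0}$, emits the prescription $\gammat{t}$ with $\gammatn{t}{n}(\hstn{h}{t}{n}) \defeq \utn{t}{n}(\otn{1:t}{0}, \hstn{h}{t}{n})$ for each $n\in[N]$. Under Assumption~\ref{assmp:ciapproach:finite_observations}, each $\gammatspace{t}$ is finite and this output coordinate depends on only finitely many coordinates of $u$, so $\phi$ is continuous with respect to the product topologies on $\uspacepure$ and $\vvspacepure$. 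For $v \in \vvspacepure$, define $\psi(v) \in \uspacepure$ by iteratively unrolling the deterministic prescriptions along the common-observation sequence (under a pure coordination policy, $\Gammat{1:t}$ is a deterministic function of $\Otn{1:t}{0}$), and by setting the time-$t$ rule of agent $n$ in $\psi(v)$ to be $\utn{t}{n}(\otn{1:t}{0}, \hstn{h}{t}{n}) \defeq \gammatn{t}{n}(\hstn{h}{t}{n})$; continuity of $\psi$ follows identically.

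The pointwise preservation of the joint-history/joint-action distribution at the pure level holds by the standard coordinator's-approach argument: under $\phi(u)$, agent $n$'s action equals $\gammatn{t}{n}(\Hstn{t}{n}) = \utn{t}{n}(\Hstn{t}{0,n})$, the exact same deterministic function of the history as under $u$, and symmetrically for $\psi$. For $\mu \in \Mta{1}{\uspacepure}$ and $\nu \defeq \phi_* \mu$, the change-of-variables formula then gives
\begin{align*}
\prup{\nu}{P_1}(\hst{h}{t}, \at{t})
&= \int_{\vvspacepure} \prup{v}{P_1}(\hst{h}{t}, \at{t})\, d\nu(v) \\
&= \int_{\uspacepure} \prup{\phi(u)}{P_1}(\hst{h}{t}, \at{t})\, d\mu(u) \\
&= \prup{\mu}{P_1}(\hst{h}{t}, \at{t}),
\end{align*}
yielding $\ocmtn{\infty}{1}(\nu) = \ocmtn{\infty}{1}(\mu)$ and hence $\ocmtn{\infty}{1}(\Mta{1}{\uspacepure}) \subseteq \ocmtn{\infty}{1}(\Mta{1}{\vvspacepure})$. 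The reverse inclusion follows verbatim by push-forward through $\psi$.

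The main obstacle is the bookkeeping of Borel measurability of $\phi$ and $\psi$ between the product spaces $\uspacepure$ and $\vvspacepure$, and the justification that the scalar maps $v \mapsto \prup{v}{P_1}(\hst{h}{t},\at{t})$ and $u \mapsto \prup{u}{P_1}(\hst{h}{t},\at{t})$ are Borel (continuity of these is already given by Lemma~\ref{lem:dec_pps:puth} and Proposition~\ref{prop:ciapproach:ocm_properties}(C)). Assumption~\ref{assmp:ciapproach:finite_observations} is the enabling ingredient here: it forces $\hstnspace{t}{n}$, $\gammatspace{t}$, and $\wthstnspace{t}{0}$ to be finite, so each output coordinate of $\phi$ and $\psi$ depends continuously on only finitely many input coordinates. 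Without this finiteness, the iterative unrolling of prescriptions in the definition of $\psi$ would need more delicate measurable-selection arguments.
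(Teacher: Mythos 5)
Your proof is correct, but it takes a genuinely different route from the paper's. The paper works with a set-valued correspondence $\Psi:\uspacepure \rightrightarrows \vvspacepure$ (constraining the coordination policy only on the prescription-observation histories consistent with $u$) to get $\ocmtn{\infty}{1}(\uspacepure)=\ocmtn{\infty}{1}(\vvspacepure)$, and then sidesteps all measurability questions at the mixture level: it truncates to a finite horizon $T$, where $\utspacepure{1:T}$ and $\vtspacepure{1:T}$ are finite and every mixture has finite support, matches the two finite-support mixtures atom by atom up to time $T$, and finally invokes Lemma~\ref{lem:macpomdp:continuityofmeasures_2} (compactness of $\Mta{1}{\vvspacepure}$ plus a subsequence/limit argument) to produce a single measure matching at all times. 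You instead build explicit single-valued selections $\phi$ and $\psi$ (your $\phi(u)$ reads off only the common-observation part of $\wthstn{t}{0}$ and ignores the embedded prescriptions, which is a legitimate selection from the paper's $\Psi(u)$ that is defined even off-path), verify they are continuous because each output coordinate depends on finitely many discrete input coordinates under Assumption~\ref{assmp:ciapproach:finite_observations}, and then transfer mixtures in one step by push-forward and change of variables. What your approach buys is an explicit witness $\phi_*\mu$ and no limiting argument; what the paper's approach buys is that it never has to establish measurability of any map between the infinite product spaces $\uspacepure$ and $\vvspacepure$, and it reuses the Lemma~\ref{lem:macpomdp:continuityofmeasures_2} machinery already deployed elsewhere. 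The one point you should make fully explicit is the identity $\prup{\nu}{P_1}(\hst{h}{t},\at{t}) = \int_{\vvspacepure} \prup{v}{P_1}(\hst{h}{t},\at{t})\, d\nu(v)$, which requires $v \mapsto \prup{v}{P_1}(\hst{h}{t},\at{t})$ to be Borel; you correctly note this follows from the continuity results already in the paper, so this is bookkeeping rather than a gap.
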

\begin{proof}\leavevmode
See \cite[Theorem 4.2]{nkhanthesis2025}.
\end{proof}

\section{Conclusion}\label{sec:conclusion}
In this paper, we considered a cooperative multi-agent constrained POMDP with a general state space, countable observations, and finite actions. We introduced decentralized policy profiles and their mixtures---namely, $\uspace$, $\uspacepure$, $\Mta{1}{\uspace}$, $\Mta{1}{\uspacepure}$, $\prod_{n=1}^{N} \Mta{1}{\unspace{n}}$, and $\prod_{n=1}^{N} \Mta{1}{\unspacepure{n}} $---and, coordination policies and their mixtures---namely $\vvspace$, $\vvspacepure$, $\Mta{1}{\vvspace}$, and $\Mta{1}{\vvspacepure}$---and, showed the following: 
\textit{i}) independently randomized decentralized policy-profiles (both pure and behavioral) have the same set of occupation measures as decentralized behavioral policy-profiles (Theorem~\ref{thm:dec_pps:ocm}); 
\textit{ii} jointly randomized behavioral and pure decentralized policy-profiles have the same set of occupation measures (Theorem~\ref{thm:dec_pps:ocm_3}); and,  
\textit{iii}) assuming finite number of observations, occupation measures realized from $\vvspace$, $\Mta{1}{\vvspace}$, and $\Mta{1}{\vvspacepure}$ all coincide with those from $\Mta{1}{\uspace}$ and $\Mta{1}{\uspacepure}$ (Theorem~\ref{thm:ciapproach:equivalence}).

Finally, these structural results on occupation measures can be used to develop results on Lagrangian duality, the minimum number of randomizations needed in an optimal behavioral coordination policy, and learning based schemes that can find approximately optimal solutions---see \cite[Chapters 2--5]{nkhanthesis2025} for details. 


\addtolength{\textheight}{-3cm}   



\bibliographystyle{ieeetr} 
\bibliography{ref_macpomdp, ref_macvs, ref_sa}

\end{document}